\documentclass[11pt]{article}

\usepackage[margin=1in]{geometry}
\usepackage{amsmath,amssymb,amsthm,mathtools}
\usepackage{hyperref}
\usepackage{comment}

\newtheorem{theorem}{Theorem}[section]	
\newtheorem{lemma}{Lemma}[section]
\numberwithin{equation}{section}
\newtheorem{corollary}{Corollary}[section]

\newtheorem{remark}{Remark}[section]
\newtheorem{example}{Example}[section]

\usepackage{upgreek} 
\usepackage{comment}

\def\PP{\mathbb{P}}
\def\RR{\mathbb{R}}
\def\Var{\mathrm{Var}}

\newcommand{\E}{\mathbb E}

\newcommand{\F}{\mathcal F}

\begin{document}
\title{\textbf{A Generalized Elephant Random Walk with Time-Dependent Memory}}

\author{Xiequan Fan
~and~ Yongcheng Qi 
\\ \\
Northeastern University at Qinhuangdao  and  University of Minnesota Duluth 
}

\date{}

\maketitle

\thispagestyle{empty}

\begin{abstract}
The elephant random walk (ERW) is a discrete-time random walk with complete memory. At every step the walker chooses one of the previous increments at random and
then either repeats with a constant probability $p\in [0,1]$ or reverses it.  Since its introduction by Sch\"utz and
Trimper, the model has become a useful probabilistic framework for studying
long-range dependence, anomalous diffusion, reinforcement, and phase
transitions.  This paper proposes a generalized ERW model with time-dependent memory. In this generalized model, 
the usual
constant memory parameter $p$ in the standard ERW is allowed to change with time.  The cental limit theorems, and the law of the iterated logarithm and some other strong laws are obtained.  
\end{abstract}

\noindent\textbf{Key words:}  elephant random walk;  central limit theorem; law of the iterated logarithm; almost sure convergence; limiting distribution 

\newpage

\section{Introduction}\label{intro}

Random walks are among the fundamental objects of probability theory, but the
classical simple random walk is based on the assumption that successive
increments are independent.  Many physical, biological, economic, and
information-processing systems exhibit instead a persistent dependence on
their past.  One natural way to introduce such dependence is to allow the
next increment to depend on a randomly selected increment from the history of
the walk.

The elephant random walk (ERW), introduced by Sch\"utz and Trimper
\cite{SchuetzTrimper2004}, is a particularly simple model of this type. The
name reflects the metaphor that an elephant  can remember where they have been.
The standard one-dimensional ERW is determined by a sequence of increments $X_1,X_2,\ldots$ with 
the position at time $n$ given by $S_n=\sum_{k=1}^n X_k$, $n\ge 1$.
More precisely, the walk starts from the origin, and the first step $X_1$ takes value $+1$ with probability $q$ and $-1$ with probability $1-q$, $q \in [0,1]$.
Given the history
$\F_n=\sigma(X_1,X_2,\ldots,X_n)$, the $\sigma$-algebra generated by $X_1,X_2,\ldots,X_n$, choose 
$U_n$ uniformly from  
$\{ 1, 2,\ldots, n\}$,
and the increment at time $n+1$ is given by
\begin{equation}\label{constant-p}
 X_{n+1}=
 \begin{cases}
 X_{U_n},&\text{with probability } p,\\
 -X_{U_n},&\text{with probability } 1-p,
 \end{cases}
\end{equation}
where the parameter $p\in [0,1]$ measures the strength of memory.  The
conditional mean of $X_{n+1}$ on $\F_n$ is 
\[
 \mathbb E(X_{n+1}\mid\mathcal F_n)
 =\frac{2p-1}{n}S_n.           \]
Thus the ERW is non-Markovian in the position $S_n$, while its asymptotic properties can
be analyzed through a martingale or urn representation.

The principal feature of the classical model is a phase transition at
$p=3/4$. In fact,  $0\leq p<3/4$, $p=3/4$, and $3/4<p\le 1$ are called diffusive regime, critical regime, and super-diffusive regime, respectively.   In the diffusive regime, $S_n$ has fluctuations of
order $\sqrt n$ and obeys a central limit theorem after the usual normalization.  At the critical regime, the normalization acquires a
logarithmic correction.  In the superdiffusive regime, the position $S_n$ is of order $n^{2p-1}$ and converges to a non-Gaussian random variable after suitable normalization.
These three regimes were subsequently established and refined using
martingale, urn, and strong approximation methods; see
\cite{Bercu2018,ColettiGavaSchuetz2017,BaurBertoin2016,HuFeng2022} and numerous properties have been discovered 
\cite{VazquezGuevara2019,KubotaTakei2019,BercuLaulin2021,BercuLaulin2021,Bertoin2022,MaElMachkouriFan2022,GutStadtmuller2023,RoyTakeiTanemuraPower2024,Fang2026}.

The literature has developed in several directions.  One group of papers
alters which part of the past is remembered, including restricted
memory, gradually increasing memory, and smooth amnesia
\cite{GutStadtmuller2021,GutStadtmuller2022,Laulin2022, Aguech2024, RoyTakeiTanemuraPower2024, RoyTakeiTanemura2026}.
Another group expands the distribution of the increments, allowing random or
general step sizes \cite{KissVeto2022,DedeckerFanHuMerlevede2023,FS2024,FLM2026}, or even
polynomially decaying step sizes \cite{Nakano2025}.  Further extensions
include delays or stops \cite{GutStadtmuller2019,GutStadtmullerZeros2021,
GutStadtmullerBernoulli2026}, multidimensional walks
\cite{BercuLaulin2019,Bertenghi2022}, reinforced memory
\cite{Baur2020,Laulin2022Reinforced}, random memory
\cite{DhillonKataria2025} and triangular-array constructions
\cite{RoyTakeiTanemura2025}.  The model has also been connected with random
recursive trees and percolation \cite{Kursten2016}, and statistical methods
for estimating the memory parameter have been developed
\cite{BercuLaulin2024}.   

A separate but relevant line of work predates the modern ERW literature.  Correlated Bernoulli processes with time-varying dependence
parameters were studied in the probability and statistics literature, and
limit theorems for such processes were obtained by James et al.~\cite{JamesJamesQi2008}. However, one of the differences between the ERM model and correlated Bernoulli processes is that all Bernoulli random variables under the framework of correlated Bernoulli processes considered in \cite{JamesJamesQi2008} have the same distribution. A variant of ERW model
in \cite{KubotaTakei2019}
is also  closely related to correlated Bernoulli processes, and 
it can reduce to the classical ERW model with $p\ge 1/2$.

The objective of the present paper is to extend the model \eqref{constant-p} via replacing the constant memory parameer $p$ by $p_n$, where $p_n\in [0,1]$ is an arbitrary sequence of probabilities. 
This model preserves the
uniform choice from the complete past but allows the probability of
repetition versus reversal to vary with time.  It should be distinguished
from models of varying memory in which the memory set itself changes.

The rest of the paper is organized as follows.  In section~\ref{main},  asymptotic properties of generalized ERW, including the central limit theorems, the law of the iterated logarithm, and other almost sure convergence are given, and some refined results under more specific conditions are presented so as to illustrate new results that can be observed for the generalized ERW model. In section~\ref{proof},  the proofs for the main results in the paper are given.

\section{Main results}\label{main}

In this paper, we assume $\{p_n;n\ge 1\}$ is a sequence of real numbers in $[0,1]$. 
Define a sequence of dependent Rademacher
random variables 
$\{X_1,X_2,\ldots\}$ as follows.
First, random $X_1$ has Rademacher distribution $\mathcal{R}(q)$ for some $q\in [0,1]$, that is, $X_1$ 
takes $+1$ with probability $q$ and
$-1$ with probability $1-q$. 
For each $n\ge 1$, conditional on random variables $X_1, \ldots, X_n$,  $\alpha_n$ and $U_n$ are two independent discrete random variables,  
$\alpha_n$ has a Rademacher $\mathcal{R}(p_n)$ distribution
and $U_n$ is uniformly distributed over
$\{1,\ldots,n\}$. Then random variable $X_{n+1}$ is defined as $X_{n+1}=\alpha_nX_{U_n}$. 

For the sequence of random variables $\{X_1,X_2,\ldots\}$ given above, define a random walk whose position at time $n$ is given by
\[
 S_n=\sum_{k=1}^n X_k,~~~n\ge 1.
\]
We call $S_n$ a generalized ERW with time-dependent memory vector $\mathbf{p}:=(p_i; i\ge 1)$. For convenience, we use ERW($\mathbf{p}$) to denote this model.  In particular, if $p_i=p$ is a constant,  
ERW($\mathbf{p}$) is written as ERW($p$), and this is exactly the standard or classical ERW defined in \eqref{constant-p}.

\subsection{Asymptotics for generalized ERW}

We need to introduce the following sequences of constants for normalization of $S_n$.

Set
\begin{equation}\label{an}
   a_n=2p_n-1, ~~~n\ge 1 
\end{equation}
and  define
\begin{equation}\label{An}
A_1= A_2=\frac12, ~~A_n=\frac12\prod_{k=2}^{n-1}\big(1+
 \frac{a_k}{k}\big)^{-1}, ~V_n=\sum^n_{k=2}A_k^2, ~~n\ge 3.
 \end{equation}

Our first theorem is a general result on the law of large numbers for ERW($\mathbf{p}$) when $\lim_{n\to\infty}V_n=\infty$, which will be used in the proofs for the central 
limit theorems and the law of the iterated logarithm.

\begin{theorem}\label{thm1} Assume $V_n\to\infty$ as $n\to\infty$. Then 
\[
\lim_{n\to\infty}\frac{S_n}{n}=0
\]
with probability one.
\end{theorem}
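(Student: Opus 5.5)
The plan is to use the standard martingale associated with the ERW and then a Kronecker-type argument with a carefully chosen normalizing sequence. The hypothesis $V_n\to\infty$ is used only to guarantee that this normalizing sequence diverges.

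\emph{Step 1: the martingale.} From the construction, $\E(X_{n+1}\mid\F_n)=a_nS_n/n$, so
\[
\E(S_{n+1}\mid\F_n)=\Big(1+\frac{a_n}{n}\Big)S_n .
\]
By \eqref{An}, $A_{n+1}=A_n(1+a_n/n)^{-1}$ for $n\ge 2$. Hence $M_n:=A_nS_n$, $n\ge 2$, is an $\F_n$-martingale. Its increments are
\[
\Delta M_{n+1}=A_{n+1}\Big(X_{n+1}-\frac{a_n}{n}S_n\Big),
\]
which satisfy $|\Delta M_{n+1}|\le 2A_{n+1}$. (If some $a_k=-k$, i.e.\ $k=1$ with $p_1=0$, this could be a problem; but the product in \eqref{An} starts at $k=2$, where $1+a_k/k\ge 1-1/2>0$, so $A_n$ is well defined and positive.)

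\emph{Step 2: the normalizing sequence.} Put $B_n:=nA_n$. The ratio is
\[
\frac{B_{n+1}}{B_n}=\frac{n+1}{n+a_n}\ge 1 \quad\text{since } a_n\le 1,
\]
so $(B_n)$ is positive and nondecreasing. I claim $B_n\to\infty$, and this is where $V_n\to\infty$ enters. If $B_n\le C$ for all $n$, then $A_k\le C/k$, so $V_n=\sum_{k\le n}A_k^2\le C^2\sum_k k^{-2}<\infty$, contradicting the hypothesis.

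\emph{Step 3: convergence.} Consider the martingale $\sum_{k\le n}\Delta M_k/B_k$. Its predictable square variation is bounded by
\[
\sum_k \frac{4A_k^2}{B_k^2}=4\sum_k\frac1{k^2}<\infty .
\]
Therefore it converges almost surely, by the $L^2$-bounded martingale convergence theorem. Since $B_n\uparrow\infty$, Kronecker's lemma gives $M_n/B_n\to0$ a.s. Finally,
\[
\frac{M_n}{B_n}=\frac{A_nS_n}{nA_n}=\frac{S_n}{n},
\]
which is the claim.

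The main obstacle is choosing the right normalization. A naive route would prove $M_n=o(V_n)$ or $M_n=O(\sqrt{V_n\log V_n})$ and then compare $V_n$ with $nA_n$. That comparison fails in general: the bound $A_k\le (n/k)A_n$ only yields $\sqrt{V_n}\lesssim nA_n$, with no room left for a logarithmic factor. Normalizing directly by $B_n=nA_n$ makes the variance series trivially summable and avoids this. The only remaining point is the monotonicity and divergence of $B_n$, which Step 2 handles.
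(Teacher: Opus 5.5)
Your proposal is correct and follows the paper's proof essentially verbatim: the paper also obtains almost sure convergence of $\sum_k \delta_k/(kA_k)$ from the summability $\sum_k A_k^2/(kA_k)^2=\sum_k k^{-2}<\infty$ (its Lemma~3, via Hall--Heyde), uses that $nA_n$ is nondecreasing and divergent (Lemmas~1(a) and~2, proved exactly as in your Step~2), and concludes by Kronecker's lemma. The only detail you pass over is the initial term $M_2$ (the paper's $p_1X_1$), which is bounded and hence vanishes after division by $nA_n\to\infty$.
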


Our next two theorems are the central limit theorem and the law of the iterated logarithm for ERW($\mathbf{p}$) model. 

\begin{theorem}\label{thm2} Assume $V_n\to\infty$ as $n\to\infty$.
Then
\[
\frac{A_nS_n}{\sqrt{V_n}}  \overset{d}\to N(0,1),
\]
where $\overset{d}\to$ denotes convergence in distribution.
\end{theorem}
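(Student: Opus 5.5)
Since $\E(X_{n+1}\mid\F_n)=a_nS_n/n$, we have
\[
\E(S_{n+1}\mid\F_n)=\Bigl(1+\frac{a_n}{n}\Bigr)S_n .
\]
The plan is to build a martingale from this identity and apply a martingale central limit theorem. With $A_n$ as in \eqref{An}, we have $A_{n+1}=A_n(1+a_n/n)^{-1}$ for $n\ge2$. Hence $M_n:=A_nS_n$ is a martingale for $n\ge 2$, whose increments are
\[
\xi_{n+1}=M_{n+1}-M_n=A_{n+1}\Bigl(S_{n+1}-\bigl(1+\tfrac{a_n}{n}\bigr)S_n\Bigr)=A_{n+1}\varepsilon_{n+1},
\qquad \varepsilon_{n+1}:=X_{n+1}-\frac{a_nS_n}{n}.
\]
We will need $1+a_k/k>0$, which holds for $k\ge2$ since $|a_k|\le1$. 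If some factor vanishes (only possible at $k=1$), it is excluded anyway by the convention $A_1=A_2=1/2$. Writing $M_n=M_2+\sum_{k=3}^n\xi_k$, the initial term $M_2$ is bounded, and since $V_n\to\infty$ it is negligible after dividing by $\sqrt{V_n}$.

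Next we compute the conditional variances. Since $X_{n+1}^2=1$,
\[
\E(\varepsilon_{n+1}^2\mid\F_n)=1-\frac{a_n^2S_n^2}{n^2}.
\]
The predictable quadratic variation is therefore
\[
\langle M\rangle_n=\sum_{k=2}^{n-1}A_{k+1}^2\Bigl(1-\frac{a_k^2S_k^2}{k^2}\Bigr).
\]
By Theorem~\ref{thm1}, $S_k/k\to0$ almost surely, and $|a_k|\le1$. So the factors in parentheses tend to $1$ almost surely. Since $\sum A_k^2=V_n\to\infty$, a Toeplitz/Cesàro-type argument gives $\langle M\rangle_n/V_n\to1$ almost surely; the finitely many early terms, and the shift between $A_{k+1}$ and $A_k$, are absorbed because $V_n\to\infty$.

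For the Lindeberg condition, note $|\varepsilon_k|\le2$. It therefore suffices to show $\max_{k\le n}A_k^2/V_n\to0$. This step needs care, because $A_k$ need not be monotone. However, $A_{k+1}/A_k=(1+a_k/k)^{-1}\in[k/(k+1),\,k/(k-1)]$, so the ratios of consecutive terms are close to $1$. Given $\epsilon>0$, I would argue as follows. If $A_m^2\ge\epsilon V_n$ for some $m\le n$, then because $A_k/A_m$ is bounded below by a product like $\prod_{j=k}^{m-1}(j-1)/j=(k-1)/(m-1)$ for $k<m$, we get $\sum_{k\le m}A_k^2\gtrsim mA_m^2$. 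This contradicts $V_n\ge \sum_{k\le m}A_k^2$ once $m$ is large. For bounded $m$, the ratio $A_m^2/V_n\to0$ simply because $V_n\to\infty$. Hence $\max_{k\le n}A_k^2=o(V_n)$, and the Lindeberg condition $\sum_{k\le n}\E(\xi_k^2 I(|\xi_k|>\delta\sqrt{V_n})\mid\F_{k-1})/V_n\to0$ holds, in fact with all terms eventually zero.

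Finally, the martingale CLT (e.g.\ Hall--Heyde, Corollary 3.1) yields $M_n/\sqrt{V_n}\overset{d}\to N(0,1)$, which is the claim. The main obstacle is the $\max A_k^2=o(V_n)$ step; if the crude ratio bound is delicate, the fallback is to bound $A_k$ from below by $c(k/m)A_m$ directly via $\log(1+x)\le x$ and $\sum_{j=k}^{m-1}1/j\le\log(m/k)+O(1/k)$.
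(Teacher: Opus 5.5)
Your proposal is correct and follows essentially the same route as the paper. You use the martingale $A_nS_n$, get the almost sure limit of the normalized conditional variance from Theorem~\ref{thm1}, verify the Lindeberg condition through $\max_{k\le n}A_k^2=o(V_n)$, and conclude with Hall--Heyde's Corollary~3.1. Your ratio bound $A_k/A_m\ge (k-1)/(m-1)$, which gives $V_m\ge mA_m^2/3$, is exactly the content of Lemma~\ref{lem1}(b),(c) in the paper.
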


\begin{theorem}\label{thm3} Assume $V_n\to\infty$ as $n\to\infty$. Then 
\[
\limsup_{n\to\infty}\frac{A_nS_n}{\sqrt{2V_n\ln\ln V_n}}=1 ~~\text{almost surely} 
\]
and
\[
\liminf_{n\to\infty}\frac{A_nS_n}{\sqrt{2V_n\ln\ln V_n}}=-1 ~~\text{almost surely}. 
\]
\end{theorem}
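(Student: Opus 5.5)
The plan is to use the martingale $M_n=2A_{n+1}S_n$, or equivalently a normalized version of $A_{n}S_n$ adjusted so that it is exactly a martingale. Since $\E(X_{n+1}\mid\F_n)=a_nS_n/n$, we have
\[
\E(S_{n+1}\mid\F_n)=\Bigl(1+\frac{a_n}{n}\Bigr)S_n .
\]
Define $M_n=A_{n+1}S_n$ for $n\ge1$; with the convention $A_2=\tfrac12$ and $A_{n+1}=A_n(1+a_n/n)^{-1}$ this is a martingale. Its increments are
\[
\Delta M_{n+1}=A_{n+2}\bigl(X_{n+1}-\E(X_{n+1}\mid\F_n)\bigr),
\]
so $|\Delta M_{n+1}|\le 2A_{n+2}$. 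The conditional variance is $A_{n+2}^2\bigl(1-a_n^2S_n^2/n^2\bigr)$. Since $A_{n+1}/A_n=(1+a_n/n)^{-1}\to1$ uniformly (because $|a_n|\le1$), the quantities $A_{n+1}$ and $A_n$ are interchangeable in the normalization, and $V_n$ is comparable to $\sum_k A_{k+1}^2$ up to $1+o(1)$ factors when $V_n\to\infty$.

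The key steps, in order, are as follows.
\begin{enumerate}
\item Theorem~\ref{thm1} gives $S_n/n\to0$ a.s. Hence
\[
\langle M\rangle_n=\sum_{k\le n}A_{k+1}^2\bigl(1-a_k^2S_k^2/k^2\bigr)\sim V_n \quad\text{a.s.},
\]
by a Toeplitz/Kronecker argument, using $V_n\to\infty$ and the fact that the correction terms are $o(1)$ times the weights $A_k^2$.
\item The increments are bounded by $2A_{n+2}$. I need $A_n^2=o(V_n/\ln\ln V_n)$, or at least a Lindeberg-type condition for the CLT. From the ratio bound, $A_k\ge A_n\prod_{j=k}^{n-1}(1+1/j)^{-1}\approx A_n k/n$ for $k\le n$. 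Therefore
\[
V_n\ge c\,A_n^2\sum_{k\le n}(k/n)^2\asymp A_n^2\, n ,
\]
so $A_n^2/V_n=O(1/n)$, which handles both conditions comfortably.
\item Apply the martingale CLT (Hall--Heyde, with $\langle M\rangle_n/V_n\to1$ a.s. and a conditional Lindeberg condition that holds trivially by bounded increments). This gives $M_n/\sqrt{V_n}\to N(0,1)$, and hence Theorem~\ref{thm2} after replacing $A_{n+1}$ by $A_n$.
\item For Theorem~\ref{thm3}, apply Stout's martingale LIL. Its hypotheses are $s_n^2=\langle M\rangle_n\to\infty$ and increments bounded by $K_n s_n/\sqrt{\ln\ln s_n^2}$ with $K_n\to0$. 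Both follow from step 2, since
\[
A_n\sqrt{\ln\ln V_n}/\sqrt{V_n}=O\bigl(\sqrt{\ln\ln n/n}\bigr).
\]
This yields $\limsup M_n/\sqrt{2\langle M\rangle_n\ln\ln\langle M\rangle_n}=1$. Replacing $\langle M\rangle_n$ by $V_n$ uses $\langle M\rangle_n/V_n\to1$ a.s. Applying the same result to $-M_n$ gives the liminf.
\end{enumerate}

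The main obstacle I expect is step 1. The Toeplitz argument needs care because the weights $A_k^2$ are not monotone, but it only requires $\sum A_k^2\to\infty$ and pointwise convergence of $a_k^2S_k^2/k^2\to0$, which suffices. A second place to check is the index bookkeeping between $A_{n}$, $A_{n+1}$ and $A_{n+2}$ relative to $V_n$; this is handled by the uniform ratio bound $A_{n+1}/A_n\in[n/(n+1),\,n/(n-1)]$.
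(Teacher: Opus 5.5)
Your argument is in substance the paper's. It uses the same martingale, the same a.s. asymptotic $\langle M\rangle_n\sim V_n$ from Theorem~\ref{thm1} plus a Toeplitz argument, and the same key bound $A_n^2/V_n=O(1/n)$, which is the paper's Lemma~\ref{lem1}(b),(c). The only real difference is which martingale LIL you invoke:
\begin{itemize}
\item You use Stout's theorem, with the random norming $s_n^2=\langle M\rangle_n$ and the increment condition $|\delta_n|\le K_n s_n/\sqrt{\ln\ln s_n^2}$, $K_n\to 0$, and then pass from $\langle M\rangle_n$ to $V_n$.
\item The paper uses Hall--Heyde Theorems~4.7--4.8 with the deterministic norming $V_n$. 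It checks $\sum_k V_k^{-2}\E(\delta_k^4\mid\F_{k-1})<\infty$ (immediate from $A_k^2/V_k\le 3/k$) and $V_n/V_{n+1}\to 1$.
\end{itemize}
Both routes are equally short. Yours needs one extra remark you left implicit: $\ln\ln V_n=O(\ln\ln n)$. This holds because $A_n/(n-1)$ is non-increasing, so $A_n\le (n-1)/2$ and hence $V_n\le n^3$.

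There are two slips to fix; neither affects the conclusion.

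\textbf{First slip: the martingale index.} With $A_{n+1}=A_n(1+a_n/n)^{-1}$, the martingale is $A_nS_n$, not $A_{n+1}S_n$. Indeed $\E(A_{n+2}S_{n+1}\mid\F_n)=A_{n+2}(1+a_n/n)S_n$. This equals $A_{n+1}S_n$ only if $a_{n+1}/(n+1)=a_n/n$. The correct increment is $\delta_{n+1}=A_{n+1}\bigl(X_{n+1}-a_nS_n/n\bigr)$, and with it no index shifting between $A_n$, $A_{n+1}$ and $A_{n+2}$ is needed.

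\textbf{Second slip: the lower bound on $A_k$.} Since $a_j$ may equal $-1$, the valid bound is $A_k\ge A_n\prod_{j=k}^{n-1}(1-1/j)=A_n(k-1)/(n-1)$. This follows from your own ratio bound $A_{j+1}/A_j\le j/(j-1)$. The bound $A_k\ge A_n k/n$ is false in general. For example, if $p_j\equiv 0$ then $A_k=(k-1)/2<A_nk/n$ for all $k<n$. The corrected bound still gives $V_n\ge A_n^2\,n/3$, which is exactly what steps~2 and~4 need.
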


When $\lim_{n\to\infty}V_n<\infty$, ERW($\mathbf{p}$) model behaves drastically differently.

\begin{theorem}\label{thm4} Assume $\lim_{n\to\infty}V_n=\sum^\infty_{k=2}A_k^2<\infty$. Then 
\[
\lim_{n\to\infty}A_nS_n=L ~~\text{almost surely} 
\]
where $L$ is a random variable.  $A_nS_n$ also converges in $\mathcal{L}^4$ to $L$, that is,
\[
\lim_{n\to\infty}\E(A_nS_n-L)^4=0.
\]
Moreover, $L$ is a nondegenerate random variable if and only if $q\in (0,1)$ or at least one of $p_n$ is smaller than $1$.
\end{theorem}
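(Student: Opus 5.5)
The plan is to build a martingale from $A_nS_n$ and use square-summability of $(A_k)$. Since $U_n$ is uniform on $\{1,\ldots,n\}$ and $\alpha_n$ is independent with mean $a_n$, we have $\E(X_{n+1}\mid\F_n)=a_nS_n/n$, hence $\E(S_{n+1}\mid\F_n)=(1+a_n/n)S_n$. For $k\ge 2$ we have $1+a_k/k\ge 1-1/k\ge 1/2>0$, so every $A_n>0$. By \eqref{An}, $A_{n+1}=A_n(1+a_n/n)^{-1}$ for $n\ge 2$. Therefore $M_n:=A_nS_n$, $n\ge 2$, is an $(\F_n)$-martingale.

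Its increments are
\[
\Delta M_{n+1}=A_{n+1}\Big(X_{n+1}-\frac{a_nS_n}{n}\Big),\qquad n\ge 2 .
\]
They satisfy $|\Delta M_{n+1}|\le 2A_{n+1}$. Their conditional variance is
\[
\E(\Delta M_{n+1}^2\mid\F_n)=A_{n+1}^2\Big(1-\frac{a_n^2S_n^2}{n^2}\Big)\le A_{n+1}^2 .
\]
Since $\sum_k A_k^2<\infty$, the martingale $M_n$ is bounded in $\mathcal L^2$. It therefore converges almost surely and in $\mathcal L^2$ to a limit $L$.

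For the $\mathcal L^4$ statement, apply Burkholder's inequality to $M_n-M_m$, $n>m$:
\[
\E(M_n-M_m)^4\le C\,\E\Big(\sum_{k=m+1}^n\Delta M_k^2\Big)^2\le 16C\Big(\sum_{k=m+1}^n A_k^2\Big)^2 ,
\]
which tends to $0$ as $m\to\infty$. So $(M_n)$ is Cauchy in $\mathcal L^4$, and its $\mathcal L^4$ limit coincides with the a.s. limit $L$.

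For nondegeneracy, the $\mathcal L^2$ convergence together with orthogonality of martingale increments gives
\[
\Var(L)=\Var(M_2)+\sum_{n\ge 2}\E\Big[A_{n+1}^2\Big(1-\frac{a_n^2S_n^2}{n^2}\Big)\Big].
\]
Thus $L$ is degenerate iff $\Var(S_2)=0$ and, for every $n\ge2$, $|a_n|\,|S_n|=n$ a.s. The latter condition means $|a_n|=1$ and $|S_n|=n$, i.e. $X_1=\cdots=X_n$.

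\emph{Sufficiency of the condition for nondegeneracy.} I would split into cases.
\begin{itemize}
\item If $q\in(0,1)$ and all $p_n=1$, then $S_n=nX_1$, so $M_2=X_1$ is nondegenerate.
\item If $p_1\in(0,1)$, then $S_2=X_1(1+\alpha_1)$ is random, since it takes the values $0$ and $\pm2$ with positive probability.
\item If $p_1=0$, then $S_2=0$. The $n=2$ term of the variance sum is then $A_3^2>0$.
\item If $p_k<1$ for some $k\ge2$, take the first such $k$ with earlier steps concentrated on $|S_k|=k$. Either $|a_k|<1$, which gives a positive term at $n=k$. Or $p_k=0$, which forces $X_{k+1}=-X_k$, so $|S_{k+1}|=k-1<k+1$ and the term at $n=k+1$ is positive.
\end{itemize}

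\emph{Necessity.} If $q\in\{0,1\}$ and all $p_n=1$, then $S_n=\pm n$ is deterministic, so $L$ is a constant. This model does satisfy the hypothesis, since here $A_n\asymp 1/n$, so $V_n$ stays bounded.

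The main obstacle I expect is the bookkeeping in this last step, because the characterization must be stated via the exact variance formula rather than via an inequality. That is why I would express the variance of $L$ exactly through the conditional variances and then handle the case $p_1=0$ (and more generally $p_k=0$) separately.
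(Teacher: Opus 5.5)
Your proposal is correct and follows essentially the paper's argument. Both use the martingale $A_nS_n$ with increments bounded by $2A_n$, almost sure convergence from square-summability of $(A_k)$, Burkholder's inequality for the $\mathcal{L}^4$ statement, and the decomposition of $\Var(L)$ into nonnegative terms $A_{n+1}^2\,\E\bigl(1-a_n^2S_n^2/n^2\bigr)$ for the nondegeneracy criterion. The only difference is cosmetic: the paper handles every case $p_{n_0}<1$ at once by noting that on $\{\alpha_{n_0}=-1\}$ one has $|S_{n_0+1}|\le n_0-1$, which makes the next variance term positive, whereas you split into subcases (and your requirement that $X_1=\cdots=X_k$ beforehand is not actually needed).
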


Our next theorem presents some sufficient conditions for non-normality of the limit $L$ in Theorem~\ref{thm4}. 

\begin{theorem}\label{thm5} Assume the limit $L$ in Theorem~\ref{thm4} is non-degenerate.  Then $L$ is  
not normal under each of the following conditions.

\noindent\textbf{(a)}  $q=1/2$ or $p_1=0$;

\noindent\textbf{(b)}  $q\ne \frac12$ and $p_n\ge \frac12$ for all $n\ge 1$.    
\end{theorem}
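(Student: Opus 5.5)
The plan is to work with the martingale structure behind Theorem~\ref{thm4} and show that some cumulant of order three or four of $L$ is nonzero. A normal law has all cumulants of order at least three equal to zero. The $\mathcal{L}^4$ convergence in Theorem~\ref{thm4} lets me compute $\E L^k$ for $k\le 4$ as $\lim_{n\to\infty}A_n^k\E S_n^k$.

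\emph{Reduction via sign symmetry.} Since $\E(S_{n+1}\mid\F_n)=(1+a_n/n)S_n$, the sequence $M_n=A_nS_n$, $n\ge2$, is a martingale with increments $A_{n+1}(X_{n+1}-\frac{a_n}{n}S_n)$. Flipping the signs of all $X_k$ maps the law of the walk given $X_1=1$ to its law given $X_1=-1$. Hence $L\overset{d}{=}X_1Z$, where $Z$ is independent of $X_1$ and has the law of $L$ given $X_1=1$. I would then derive closed recursions for $m_n^{(k)}=\E(S_n^k\mid X_1=1)$, $k=1,2,3,4$, from $X_{n+1}^2=1$ and
\[
\E(X_{n+1}\mid\F_n)=\frac{a_n}{n}S_n .
\]
For example,
\[
\E(S_{n+1}^2\mid\F_n)=\Big(1+\frac{2a_n}{n}\Big)S_n^2+1 ,
\]
and the analogous identities for cubes and fourth powers involve only lower powers of $S_n$. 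Multiplying by $A_{n+1}^k$ turns these into summable telescoping expressions. The main consequence is
\[
\E Z^4 - 3(\E Z^2)^2 = \lim_{n\to\infty} A_n^4\Big(m_n^{(4)}-3\big(m_n^{(2)}\big)^2\Big).
\]
Each step contributes to this quantity a correction of the form $-2\sum_k A_{k+1}^4(\cdots)$, coming from the fact that the increments are $\pm1$ (Rademacher kurtosis $-2$). The goal is an explicit representation of the fourth cumulant of the martingale limit that is strictly negative whenever $L$ is non-degenerate.

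\emph{Case (a).} If $q=1/2$, then $L\overset{d}{=}X_1Z$ is symmetric with mean $0$. If $p_1=0$, then $S_2=0$, and $L$ is a weighted sum of later increments whose law is invariant under sign flip, so again $L$ is symmetric with mean $0$. In either situation a normal $L$ would need $\E L^4=3(\E L^2)^2$. I would show instead that $\E L^4-3(\E L^2)^2<0$ using the fourth-moment recursion above.

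\emph{Case (b).} Here $q\ne\frac12$ and $a_n\ge0$ for all $n$. If $L\sim N(\mu,\sigma^2)$, the factorization $L=X_1Z$ gives, for characteristic functions, $\mathrm{Re}\,\varphi_Z=\mathrm{Re}\,\varphi_L$ and $(2q-1)\,\mathrm{Im}\,\varphi_Z=\mathrm{Im}\,\varphi_L$. Consequently the even moments of $Z$ are those of $N(\mu,\sigma^2)$, while the odd moments of $Z$ are those of $N(\mu,\sigma^2)$ divided by $2q-1$. With $\E Z=\mu/(2q-1)$, the conditions on $\E Z^2$ and $\E Z^4$ together with the cubic relation give algebraic identities between $\E Z,\E Z^2,\E Z^3,\E Z^4$. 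I would check these against the explicit moment recursions. Since $a_n\ge0$, all the recursions have nonnegative coefficients, which should force a strict inequality, for instance on the fourth cumulant of $Z$, that contradicts these identities.

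The step I expect to be the main obstacle is the sign control in this last comparison. In case (b) the first three moments interact, and the fourth cumulant of $L$ itself also picks up contributions from the mixing over $X_1$. The first thing I would try is to bound $Z$ itself: when $a_n\ge0$ the conditional drift of $S_n$ given $X_1=1$ is nonnegative. I would then show that the platykurtic Rademacher contribution dominates, leaving a contradiction with the Gaussian identities.
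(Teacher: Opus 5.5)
For (a) you follow the paper's route: a recursion for $K_n=\E S_n^4-3(\E S_n^2)^2$, passage to $\kappa_n=A_n^4K_n$, $\mathcal{L}^4$ convergence, and $\E L=p_1(2q-1)=0$. However, the decisive inequality is only announced. The heuristic you give for it, that every step contributes a negative correction, fails at the start. The exact recursion is
\[
K_{n+1}=\Big(1+\frac{4a_n}{n}\Big)K_n-\frac13\Big(\frac{6a_nE_n}{n}+2\Big)^2-\frac23,\qquad E_n=\E S_n^2 .
\]
So the forcing term is at most $-\frac23$. But the multiplier $1+4a_n/n$ can be negative for $n\le 3$, and $K_2=-4(1+a_1)(1+3a_1)$ is positive for $a_1\in(-1,-\frac13)$; for example, $p_1=\frac16$ gives $K_2=\frac43$. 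Hence $\kappa_\infty$ is not a sum of negative contributions from $n=1$ on. The paper computes $K_3=-\frac23-\frac13(3E_3-5)^2$ and $K_4=-\frac13G(E_3,a_3)$ explicitly and checks $G(x,y)\ge0$ for $x\ge1$, $|y|\le1$. Equality holds at $x=1$, $y=-1$, so there is no slack. Only then does the conclusion follow: since $1+4a_n/n\ge0$ for $n\ge4$ and $\prod_{k\ge5}(1+\frac{4a_k}{k})(1+\frac{a_k}{k})^{-4}\in(0,\infty)$, one gets $\kappa_\infty\le\kappa_5\prod_{k\ge5}(\cdots)<0$. This early-time verification is the real content of (a), and it is missing from your plan.

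For (b) the gap is more serious. You never name a quantity whose sign the dynamics control, and the ones you mention cannot yield a contradiction. Suppose $L\sim N(\nu,\sigma^2)$ with $\nu=\E L=p_1(2q-1)\neq0$. Then $\E L^4-3(\E L^2)^2=-2\nu^4<0$. Your own moment identities give that the fourth cumulant of $Z$ equals $-2\nu^4(1-u)(1-3u)$ with $u=(2q-1)^{-2}>1$ when $|2q-1|<1$, which is also negative. So a strict negative sign for either fourth-order quantity is compatible with normality, and the factorization $L\overset{d}{=}X_1Z$ imposes nothing by itself.

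The missing idea is the third central moment $C_n=\E(M_n-p_1\mu)^3$, $\mu=2q-1$. Expand $(Y_n+\delta_{n+1})^3$ with $Y_n=M_n-p_1\mu$ and use $\E(\delta_{n+1}^3\mid\F_n)=-2A_{n+1}^3\frac{a_nS_n}{n}\big(1-\frac{a_n^2S_n^2}{n^2}\big)$. This gives the closed recursion $C_{n+1}=c_nC_n+e_n$ with $c_n=n^2(n+3a_n)/(n+a_n)^3>0$ and
\[
\frac{e_n}{p_1\mu}=\frac{2a_n}{(n+a_n)^3}\Big(-3a_nn\Var(M_n)-n^2A_n^2+a_n^2p_1^2\mu^2\Big)\le0
\]
when $a_n\ge0$, since $nA_n\ge2A_2=1$. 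Also $C_2/(p_1\mu)=1-3p_1+2p_1^2\mu^2\le(1-p_1)(1-2p_1)\le0$ for $p_1\ge\frac12$. Strictness comes from $|\mu|<1$. When $|\mu|=1$, it comes instead from two facts together:
\begin{itemize}
\item $a_n>0$ for infinitely many $n$, since otherwise $A_n$ is eventually constant and $V_n\to\infty$;
\item $\Var(M_n)\to\Var(L)>0$.
\end{itemize}
Hence $C_\infty/(p_1\mu)<0$, and $L$ has a nonzero third central moment. This sign structure is exactly where $p_n\ge\frac12$ is used, and your plan has nothing in its place.
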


\vspace{10pt}

\begin{remark}  It is known that when $p_n=p\in (3/4,1]$ for all $n\ge 1$, the limit random variable $L$ in Theorem~\ref{thm4} is not Gaussian; see e.g., \cite{Bercu2018}.  The proof for the non-normality of the limit $L$ for the classical ERW model is based on  its first four moments which have explicit expressions as functions of $p$.  In the present paper,  the moments of $L$ are much more complicated. In part (a) of Theorem~\ref{thm5}, no constrains on $p_n$ is imposed, that is, the limit $L$ is not normal if $\E(X_1)=2q-1=0$. By combining parts (a) and (b) of Theorem~\ref{thm5}, any non-degenerate limit $L$ is not normal if $p_n\ge 1/2$ for all $n$, regardless of the value of $q=\PP(X_1=1)$.  This result reduces to the result for the classical ERW model in the superdiffusive regime.

\end{remark}

\subsection{Phase transition and discussions}

 The classical ERW model has a striking phase transition at $p=3/4$. For $p<3/4$, the walk is diffusive and asymptotically Gaussian on the $\sqrt n$ scale; at $p=3/4$, the scale becomes $\sqrt{n\log n}$; for $p>3/4$,  the walk is superdiffusive and has a non-Gaussian random limit after normalization by $n^{2p-1}$. The generalized ERW has more flexibility and should produce some different results than the classical ERW model.  

Now we discuss the asymptotic properties of the generalized ERW under more specific conditions. We will first consider the situation when $p_n$ converges to some constant probability $p\in [0,1]$, that is,
\begin{equation}\label{limit-p}
\lim_{n\to\infty}p_n=p\in[0,1].
\end{equation}
Our interest is in whether the the asymptotic behavior of ERW($\mathbf{p}$) depends on the limiting memory parameter $p$ only.

For convenience, we first investigate some properties of $A_n$ and $V_n$ under condition \eqref{limit-p}, where $A_n$ and $V_n$ are defined in \eqref{An}.  

Define 
\[
\omega(x):=x-\ln(1+x), ~~x>-1.
\]
Then
one can easily check that
\[
-x^2\le \omega(x)\le x^2,  ~~~x\in [-\frac12, \frac32].
\]
Also note that for each $k\ge 2$, $-1/2\le \frac{2p_k-1}{k}\le 1/2$ and $-1/2\le \frac{2p-1}{k}\le 1/2$. We have
\begin{equation}\label{convergent}
\sum^\infty_{k=2}|\omega(\frac{2p_k-1}{k})|\le \sum^\infty_{k=2}\frac{1}{k^2}<\infty ~\text{ and }~
\sum^\infty_{k=2}|\omega(\frac{2p-1}{k})|\le \sum^\infty_{k=2}\frac{1}{k^2}<\infty.
\end{equation}
By combining the following two equations  
\[
-\sum^{n-1}_{k=2}\ln(1+\frac{2p_k-1}{k})=-\sum^{n-1}_{k=2}\frac{2p_k-1}{k}+\sum^{n-1}_{k=2}\omega(\frac{2p_k-1}{k}),~n\ge3
\]
and 
\[
-\sum^{n-1}_{k=2}\ln(1+\frac{2p-1}{k})=-\sum^{n-1}_{k=2}\frac{2p-1}{k}+\sum^{n-1}_{k=2}\omega(\frac{2p-1}{k}),~n\ge 3
\]
we have
\begin{equation}\label{ratio}
\frac{\prod^{n-1}_{k=2}\big(1+\frac{2p_k-1}{k}\big)^{-1}}{\prod^{n-1}_{k=2}\big(1+\frac{2p-1}{k}\big)^{-1}}=\exp\left\{-2\sum^{n-1}_{k=2}\frac{p_k-p}{k}+\sum^{n-1}_{k=2}\omega(\frac{2p_k-1}{k})-\sum^{n-1}_{k=2}\omega(\frac{2p-1}{k})\right\}.
\end{equation}

Denote $\Gamma$ as the gamma function, that is,
\[
\Gamma(x)=\int^\infty_0t^{x-1}e^{-t}dt,~~x>0.
\]
Then we obtain
\[
\prod^{n-1}_{k=2}\big(1+\frac{2p-1}{k}\big)^{-1}=\frac{\prod^{n-1}_{k=2}k}{\prod^{n-1}_{k=2}(k+2p-1)}=\frac{\Gamma(n)\Gamma(1+2p)}{\Gamma(n+2p-1)}.
\]
which, coupled with Stirling’s formula, yields $\Gamma(n)/\Gamma(n+2p-1)=n^{1-2p}\big(1+o(1)\big)$, and thus
\begin{equation}\label{app}
    \prod^{n-1}_{k=2}\big(1+\frac{2p-1}{k}\big)^{-1}=\Gamma(1+2p)n^{1-2p}\big(1+o(1)\big)
\end{equation}
as $n\to\infty$.  Now combining \eqref{convergent}, \eqref{ratio} and \eqref{app} we can write
\begin{equation}\label{An-app}
 A_n=C_nH_nn^{1-2p},   
\end{equation}
where $\lim_{n\to\infty}C_n=C$ for some constant $C\in  (0,\infty)$, and 
\begin{equation}\label{Hn}    
H_n=\exp\left\{-2\sum^{n-1}_{k=2}\frac{p_k-p}{k}\right\}.
\end{equation}

Recall that a function $\ell(x)$, defined on $(c,\infty)$ for some $c>0$, is slowly varying is 
\[
\lim_{t\to\infty}\frac{\ell(xt)}{\ell(t)}=1~~\text{for any }x>0.
\]
A function $\varphi$ is regularly varying with index $\alpha$ if $\varphi(x)=x^{\alpha}\ell(x)$ for some slowly varying function $\ell(x)$. 

One can easily verify that $H_n$ is slowly varying under condition \eqref{limit-p} in the sense that 
$\ell(x)=H_{\lceil x \rceil}$ is slowly varying, where
$\lceil x \rceil$ denotes
the smallest integer larger than or equal to $x$.  

Of course, $C_nH_n$ is also slowly varying. From \eqref{An-app}, $A_n$ is regularly varying with index $\alpha=1-2p$, and $A_n^2$ is regularly varying with index $2(1-2p)$. 

From the properties of regularly varying functions,
\[
\lim_{x\to\infty}\frac{\ell(x)}{\ln x}=0 ~~\text{if $\ell(x)$ is slowly varying.} 
\]
We have
$\sum^\infty_{k=2}A_k^2=\infty$ if $p<\frac{3}{4}$, or equivalently $2(1-2p)>-1$, and $\sum^\infty_{k=2}A_k^2<\infty$ if $p>\frac{3}{4}$.

Furthermore, from Karamata’s Theorem (see,
e.g., Theorem~1.5.11 in \cite{BGT1987}), we have for $p<3/4$
\begin{equation}\label{Vnestimation}
V_n=\sum^n_{k=2}A_k^2=\int^n_1A^2_{\lceil x
\rceil}dx=\frac{nA_n^2}{2(1-2p)+1}\big(1+o(1)\big)=
\frac{nA_n^2}{3-4p}\big(1+o(1)\big)
\end{equation}
as $n\to\infty$.  When \eqref{limit-p} holds with $p<3/4$, we have from the above discussion that
\[
\frac{S_n}{\sqrt{n}}=\frac{1+o(1)}{\sqrt{3-4p}}\frac{A_n}{\sqrt{V_n}}~\text{ and }~
\frac{S_n}{\sqrt{2n\ln\ln n}}=\frac{1+o(1)}{\sqrt{3-4p}}\frac{A_nS_n}{\sqrt{2V_n\ln\ln V_n}}.
\]
Immediately from Theorems~\ref{thm2} and \ref{thm3} we have 

\begin{corollary}\label{cor1} Assume $\lim_{n\to\infty}p_n=p\in [0,3/4)$.  Then we have
\[
\frac{S_n}{\sqrt{n}}\overset{d}\to N(0,\frac{1}{3-4p})
\]
and
\[
\limsup_{n\to\infty}\frac{\pm S_n}{\sqrt{2n\ln\ln n}}=\frac1{\sqrt{3-4p}}
 ~~\text{almost surely}.
 \]
\end{corollary}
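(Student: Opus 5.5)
The plan is to deduce Corollary~\ref{cor1} directly from Theorems~\ref{thm2} and \ref{thm3}. The work is to check that $V_n\to\infty$ and to replace the normalizations $A_n/\sqrt{V_n}$ and $\sqrt{2V_n\ln\ln V_n}/A_n$ by $\sqrt{n}$ and $\sqrt{2n\ln\ln n}$.

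First, under $\lim p_n=p<3/4$, representation \eqref{An-app} gives $A_n=C_nH_nn^{1-2p}$ with $C_nH_n$ slowly varying. Hence $A_n^2$ is regularly varying with index $2(1-2p)>-1$. Karamata's theorem then yields \eqref{Vnestimation}, namely $V_n\sim nA_n^2/(3-4p)$. Since $nA_n^2$ is regularly varying with positive index $3-4p$, it tends to infinity, so $V_n\to\infty$. The hypotheses of Theorems~\ref{thm2} and \ref{thm3} therefore hold.

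For the central limit theorem, I write
\[
\frac{S_n}{\sqrt n}=\frac{\sqrt{V_n}}{A_n\sqrt n}\cdot\frac{A_nS_n}{\sqrt{V_n}}.
\]
By \eqref{Vnestimation}, the deterministic factor $\sqrt{V_n}/(A_n\sqrt n)$ tends to $1/\sqrt{3-4p}$. Slutsky's lemma together with Theorem~\ref{thm2} then gives $S_n/\sqrt n\overset{d}\to N(0,1/(3-4p))$.

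For the law of the iterated logarithm, I use the analogous identity
\[
\frac{S_n}{\sqrt{2n\ln\ln n}}=\frac{\sqrt{V_n\ln\ln V_n}}{A_n\sqrt{n\ln\ln n}}\cdot\frac{A_nS_n}{\sqrt{2V_n\ln\ln V_n}}.
\]
The only extra point, and the only place needing care, is $\ln\ln V_n\sim\ln\ln n$. This holds because $V_n=n^{3-4p}\ell(n)$ with $\ell$ slowly varying, and $\ln\ell(n)/\ln n\to0$, so $\ln V_n\sim(3-4p)\ln n$. Taking logarithms again, $\ln\ln V_n=\ln\ln n+O(1)$. Thus the deterministic prefactor converges to $1/\sqrt{3-4p}$.

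Since the prefactor is positive and convergent, the limsup and liminf statements of Theorem~\ref{thm3} transfer directly. This gives the $\limsup$ of $\pm S_n/\sqrt{2n\ln\ln n}$ equal to $1/\sqrt{3-4p}$ almost surely, where the minus sign case comes from the liminf statement. No step is a genuine obstacle; the mildest one is the iterated-logarithm comparison, which reduces to the fact that slowly varying functions are $n^{o(1)}$.
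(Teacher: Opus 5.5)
Your proposal is correct and follows essentially the same route as the paper. Both use the representation \eqref{An-app} and Karamata's theorem to get $V_n\sim nA_n^2/(3-4p)\to\infty$, then transfer Theorems~\ref{thm2} and \ref{thm3} through deterministic prefactors tending to $1/\sqrt{3-4p}$. Your explicit check that $\ln\ln V_n=\ln\ln n+O(1)$, via $\ln\ell(n)/\ln n\to0$, spells out a step the paper only asserts.
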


\vspace{20pt}

Clearly,  the above central limit theorem and the law of the iterated  logarithm are exactly the same as in those in \cite{Bercu2018} for the case when $p_n=p$.  It means that the convergence rate of $p_n$ to $p$
for $p<3/4$ does not change the asymptotic behaviors of $S_n$.  However, our further investigation shows that it is no longer the case when $p=3/4$ or $p>3/4$. In fact,  for both cases,  the slowly varying function $H_n$ defined in \eqref{Hn} greatly impact the asymptotic behaviors of $S_n$.

When $p\in (3/4, 1]$, we have $\sum^\infty_{k=2}A_k^2<\infty$ as aforementioned.. Using the representation of $A_n$ given in \eqref{An-app} and noting that $C_n\to C\in (0,\infty)$, we conclude the following corollary from
Theorem~\ref{thm4}.

\begin{corollary}\label{cor2} Assume $\lim_{n\to\infty}p_n=p\in (3/4, 1]$.  Then we have
\[
\lim_{n\to\infty}\frac{H_nS_n}{n^{2p-1}}=L  ~~\text{almost surely}
\]
almost surely, where $L$ is a non-degenerate random variable if $q\in (0,1)$ or at least one $p_n$ is not equal to one. 
\end{corollary}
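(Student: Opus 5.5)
The plan is to deduce Corollary~\ref{cor2} directly from Theorem~\ref{thm4}, with the representation \eqref{An-app} for $A_n$ supplying the conversion. First I check the hypothesis of Theorem~\ref{thm4}, namely $\sum_{k\ge2}A_k^2<\infty$. By \eqref{An-app}, $A_k^2=C_k^2H_k^2k^{2(1-2p)}$. Here $C_k\to C\in(0,\infty)$ and $H_k$ is slowly varying, so $A_k^2$ is regularly varying with index $2(1-2p)<-1$ when $p>3/4$. To make this concrete, fix $\varepsilon>0$ with $2(1-2p)+\varepsilon<-1$. The standard bound $\ell(x)=o(x^{\varepsilon})$ for a slowly varying $\ell$ (Potter's bound) gives $A_k^2=O\big(k^{2(1-2p)+\varepsilon}\big)$, which is summable. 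One could also observe that $|p_k-p|\le\delta$ for large $k$ gives $H_k\le c\,k^{2\delta}$ directly from \eqref{Hn}, avoiding regular-variation theory altogether. In either case Theorem~\ref{thm4} yields $A_nS_n\to L$ almost surely, with $L$ non-degenerate exactly when $q\in(0,1)$ or some $p_n<1$.

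Next I rewrite the normalization. By \eqref{An-app},
\[
A_nS_n=C_n\,\frac{H_nS_n}{n^{2p-1}} .
\]
Since $C_n\to C\in(0,\infty)$, dividing by $C_n$ gives
\[
\frac{H_nS_n}{n^{2p-1}}=\frac{A_nS_n}{C_n}\to \frac{L}{C}\qquad\text{almost surely.}
\]
Renaming $L/C$ as $L$, in keeping with the statement's convention that $L$ denotes a generic random limit, gives the almost sure convergence. Non-degeneracy is preserved because $C$ is a positive deterministic constant, so the non-degeneracy condition transfers verbatim from Theorem~\ref{thm4}.

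I expect the only real care to be needed in justifying the summability, since slowly varying factors can grow. The $\varepsilon$-margin argument above settles this, because the exponent $2(1-2p)$ lies strictly below $-1$. The case $p=1$ is included: the index is then $-2$ and nothing changes. Everything else is immediate from Theorem~\ref{thm4} and \eqref{An-app}.
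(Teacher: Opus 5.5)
Your proposal is correct and follows the paper's route: show $\sum_k A_k^2<\infty$ because $A_n^2$ is regularly varying with index $2(1-2p)<-1$, apply Theorem~\ref{thm4}, then rewrite $A_nS_n=C_nH_nS_n/n^{2p-1}$ with $C_n\to C\in(0,\infty)$ and absorb $C$ into $L$. Your summability argument, via $\ell(x)=o(x^{\varepsilon})$ or the direct bound $H_k\le c\,k^{2\delta}$, is actually the sound one. The property $\ell(x)/\ln x\to 0$ that the paper quotes fails for general slowly varying functions; for instance, $H_n=d(\ln n)^{2}$ in Example~\ref{exam1} with $a=-1$.
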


\vspace{20pt}

\begin{example}\label{exam1} Assume $3/4<p\le 1$.  Let $a\in \RR$ be a fixed number and $n_0\ge 2$ be integer such that  $-p\le a/\ln(n_0)\le 1-p$. Define 
$p_n=p+na\big(\ln\ln(n+1)-\ln\ln(n)\big)$ for $n\ge n_0+1$, and 
$p_1, \ldots, p_{n_0}$ are any $n_0$ given numbers in $[0,1]$.  Note that 
\[
0<n\big(\ln\ln(n+1)-\ln\ln(n)\big)=\int^{1}_0\frac{n}{(n+t)\ln(n+t)}dt<\frac{1}{\ln(n)}\to 0
\]
as $n\to\infty$. Then it is easy to see $0\le p_n\le 1$ for all $n>n_0$. 
From the definition we have
\[
H_n=\exp\left\{-2\sum^{n-1}_{k=2}\frac{p_k-p}{k}\right\}=
\exp\left\{-2a\ln\ln(n)-2\sum^{n_0}_{k=2}\frac{p_k-p}{k}-2a\ln\ln(n_0)\right\}=(\ln n)^{-2a}d
\]
for all $n>n_0$, where $d=\exp\left\{-2\sum^{n_0}_{k=2}\frac{p_k-p}{k}-2a\ln\ln(n_0)\right\}$.
It follows from Corollary~\ref{cor2} that
\[
\lim_{n\to\infty}\frac{S_n}{n^{2p-1}\ln^{2a} (n)}=L_1
\]
almost surely, where $L_1$ is a non-degenerate random variable.   We remark that  $a\le 0$ is required when $p=1$. 
\end{example}

\begin{example}\label{exam2} Assume $p\in (-3/4,1]$ and $a\in \RR$ is a fixed number.  When $p=1$,  we also assume $a\ge 0$. Assume $\beta\in (0,1)$.  
Define $p_n=p+an\big(\ln^{\beta}(n+1)-\ln^{\beta}(n)\big)$ for all large $n$. One can show that $p_n\in [0,1]$ for all $n>n_0$ for some integer $n_0$. As in Example~\ref{exam1},  $p_1, \ldots, p_{n_0}$ are any given numbers in $[0,1]$. Then we have that
 $H_n$ is proportional to $\exp\{-2a\ln^{\beta}(n)\}$ for all large $n$.  Then it follows from Corollary~\ref{cor2} that
\[
\lim_{n\to\infty}\frac{S_n}{n^{2p-1}\exp\{2a\ln^{\beta}(n)\}}=L_2
\]
almost surely, where $L_2$ is a non-degenerate random variable.
\end{example}

The case when $p=3/4$ is of particular interest but condition \eqref{limit-p} alone does not imply  $\lim_{n\to\infty}V_n=\infty$  or $\lim_{n\to\infty}V_n<\infty$.
From our previous discussion,  $A_n^2$ is regularly varying with index $-1$ when $p=3/4$.  In this case, there is no general estimation for $V_n$ like in \eqref{Vnestimation}.
Therefore, a general discussion seems difficulty. In what follows,  we will adapt Examples~\ref{exam1} and \ref{exam2} to show some quite different behaviors in the case $p=3/4$.

\begin{example}\label{exam3} Assume $p=3/4$ and define $p_n$ in the same way as in Example~\ref{exam1}.  That is, 
we define $p_n=3/4+na\big(\ln\ln(n+1)-\ln\ln(n)\big)$ for $n\ge n_0+1$, where $n_0$ satisfies the same condition  as in Example~\ref{exam1}.  The computation for $H_n$ remains valid, that is,   $H_n=(\ln n)^{-2a}d$ for all $n\ge n_0+1$.
According to \eqref{An-app}, we have 
\[
V_n=\sum^n_{n=2}A_k^2=\sum^n_{k=2}C_k^2H_k^2k^{-1}
\begin{cases}
\sim \frac{C^2d^2}{1-4a}\ln^{1-4a}(n) &\text{ if }~a<1/4;\\
\sim C^2d^2\ln\ln n   & \text{ if }     ~ a=1/4;\\
\text{is bounded}     & \text{ if }     ~  a>1/4.
\end{cases}
\]
Therefore, $V_n$ diverges when $a\le 1/4$, 
\[
\frac{\sqrt{V_n}}{A_n}\sim
\begin{cases}
 \frac{1}{\sqrt{1-4a}}\sqrt{n\ln^{1-4a}(n)} &\text{ if }~a<1/4;\\
 \sqrt{n\ln_2(n)}   & \text{ if }     ~ a=1/4,
\end{cases}
\]
and
\[
\frac{\sqrt{2V_n\ln\ln V_n}}{A_n}\sim
\begin{cases}
 \frac{1}{\sqrt{1-4a}}\sqrt{2n\ln^{1-4a}(n)\ln_3(n)} &\text{ if }~a<1/4;\\
 \sqrt{2n\ln_2(n)\ln_4(n)}   & \text{ if }     ~ a=1/4,
\end{cases}
\]
where $\ln_{j+1}(n)=\ln(\ln_{j}(n))$ for $j\ge 1$ with $\ln_1(n)=\ln n$.
By using Theorems~\ref{thm2} and \ref{thm3} we have
\[
\frac{S_n}{\sqrt{n\ln^{1-4a}(n)}}\overset{d}\to N(0, 1-4a)~~~\text{ if } a>1/4; 
\]
\[
\frac{S_n}{\sqrt{n\ln_2n}}\overset{d}\to N(0, 1)~~~\text{ if } a=1/4, 
\]
\[
\limsup_{n\to\infty}\frac{\pm S_n}{\sqrt{2n\ln^{1-4a}(n)\ln_3(n)}}=\sqrt{1-4a}~~~\text{almost surely if }a<1/4
\]
and 
\[
\limsup_{n\to\infty}\frac{\pm S_n}{\sqrt{2n\ln_2(n)\ln_4(n)}}=1
~~~\text{almost surely if }a=1/4.
\]

When $a>1/4$, we apply Theorem~\ref{thm4} and obtain  
\[
\lim_{n\to\infty}\frac{S_n}{\sqrt{n\ln^{2a}(n)}}=L_3
\]
for some non-degenerate random variable $L_3$.
\end{example}

\begin{example}\label{exam4} Assume $p=3/4$,  $a\in \RR$ and $\beta\in (0,1)$.  Define $p_n$
as in Example~\ref{exam2}, that is, $p_n=3/4+an\big(\ln^{\beta}(n+1)-\ln^{\beta}(n)\big)$ for all large $n$. Again, $H_n$ is proportional to $\exp\{-2a\ln^{\beta}(n)\}$ for all large $n$,  that is,  $H_n=d\exp\{-2a\ln^{\beta}(n)\}$ for large $n$, where $d>0$ is a constant.

It is easy to check that $\lim_{n\to\infty}V_n=\infty$ if $a\le  0$ and 
$\lim_{n\to\infty}V_n<\infty$ if $a>0$.  Moreover, we have
for $a\le 0$
\[
V_n=\sum^n_{k=2}C_n^2H_k^2k^{-1}\sim C^2d^2\int^{n}_1\frac{\exp\{-4a\ln^{\beta}(t)\}}{t}dt=
C^2d^2\int^{\ln n}_0\exp\{-4ay^\beta\}dy.
\]
Since 
\[
\int^{\ln n}_0\exp\{-4ay^\beta\}dy
\begin{cases}
=\ln n & \text {if }~a=0;\\
\sim \frac{1}{-4a\beta}\ln^{1-\beta}(n)\exp\{-4a\ln^{\beta}(n)\} &  \text {if }~a<0,\
\end{cases}
\]
we have
\[
V_n\sim
\begin{cases}
C^2d^2\ln n & \text { if }~a=0;\\
\frac{C^2d^2}{-4a\beta}\ln^{1-\beta}(n)\exp\{-4a\ln^{\beta}(n)\} &  \text { if }~a<0.\
\end{cases}
\]
From Theorems~\ref{thm2} and \ref{thm3} we conclude 
if $a=0$
\begin{equation}\label{CLT-34}
\frac{S_n}{\sqrt{n\ln n}}\overset{d}\to N(0, 1),
\end{equation}
\begin{equation}\label{LIL-34}
    \limsup_{n\to\infty}\frac{\pm S_n}{\sqrt{2n\ln n\ln_3(n)}}=1~~~\text{ almost surely},
\end{equation}
and  if $a<0$
\[
\frac{S_n}{\sqrt{n\ln^{1-\beta}(n)}}\overset{d}\to N(0,\frac{1}{-4a\beta}),
\]
\[
\limsup_{n\to\infty}\frac{\pm S_n}{\sqrt{n\ln^{1-\beta}(n)\ln\ln n}}=\frac{1}{\sqrt{-2a}}~~~\text{ almost surely.}
\]
\end{example}

\begin{remark}\label{remark2}  In both Examples~\ref{exam3} and \ref{exam4}, the results for special case $a=0$, i.e,  \eqref{CLT-34} and \eqref{LIL-34}, correspond to the central limit theorem  and the law of the iterated logarithm for
the classical ERW in the critical regime; see, e.g., Theorems 3.6 and 3.5 in \cite{Bercu2018}.  
Our examples here reveal many new types of convergence rates from our generalized ERW model. Since the almost sure convergence in Theorem~\ref{thm4} also implies the converge in distribution with rate $1/A_n$, which is of the same order as $\sqrt{V_n}/A_n$. Combining Theorem~\ref{thm2} and \ref{thm4},  $\sqrt{V_n}/A_n$ 
seems a unified expression for the rate of distributional convergence.  From part (c) of Lemma~\ref{lem1}, $\sqrt{V_n}/A_n$ is between $\sqrt{n/3}$ and $n$.

\end{remark}

\section{Proofs}\label{proof}

For each $n\ge 1$, write  
$\mathcal F_n=\sigma(X_1,\ldots,X_n)$ as in section~\ref{intro}. 
Define $\mathcal F_0=\{\emptyset, \Omega\}$ as the trivial $\sigma$-algebra. 

It is easy to see that
\begin{equation}\label{conditional-mean}
 \mathbb E(X_n\mid\mathcal F_{n-1})
 =\frac{a_{n-1}}{n-1}S_{n-1}, ~n\ge 2   
\end{equation}
and
\begin{equation}\label{eq:mean}
 \mathbb E(X_1\mid\mathcal F_0)=\mathbb E(X_1)=2q-1.    \end{equation}

Since $-1\le a_n\le 1$,  
we have $1+\frac{a_n}{n}>0$ for all $n\ge 2$, and $A_n\in (0,\infty)$.
Note that $1+a_1=2p_1\ge 0$,  and $1+a_1=0$ only if $p_1=0$. 

We now define a normalized ERW by
\[
M_0=p_1(2q-1),~~M_1=\frac12(1+a_1)X_1=p_1X_1,~~M_n=A_nS_n,~~n\ge 2.
\]
By using \eqref{conditional-mean}, \eqref{eq:mean} and 
\eqref{An},  it is easy to check that
\[
\E(M_n|\mathcal F_{n-1})=M_{n-1},~~~n\ge 1; 
\]
that is,  $\{M_n, \mathcal F_n; n\ge 0\}$ is a martingale sequence. We now define a sequence of martingale
differences $\{\delta_n, \mathcal F_n; n\ge 1\}$ 
by
\[
\delta_n=M_n-M_{n-1}, ~~n\ge 1
\]
since $\E(\delta_n|\mathcal F_{n-1})=0$ for $n\ge 1$.  More specifically, we have
\begin{equation}\label{deltan}
\delta_1=p_1\Big(X_1-(2q-1)\Big), ~~\delta_n=A_{n}\left(X_{n}-a_{n-1}\frac{S_{n-1}}{n-1}\right),~n\ge 2,
\end{equation}
\begin{equation}\label{Conditional-2}
\E(\delta_1^2|\F_0)=4p_1^2q(1-q),~~\E[\delta_n^2\mid\F_{n-1}]=A_{n}^2\left(1-a_{n-1}^2(\frac{S_{n-1}}{n-1})^2\right), ~n\ge 2,
\end{equation}
and
\begin{equation}\label{AnSn}
    A_nS_n=M_n=\sum^n_{k=2}\delta_k+\delta_1+2p_1(2q-1)=
\sum^n_{k=2}\delta_k+p_1X_1,~~n\ge 2.
\end{equation}
Since $|a_n|\le 1$, $|S_n/n|\le 1$ and $|X_n|=1$ for all $n\ge 1$, we have the definition of $\delta_n$ and equation \eqref{Conditional-2} that
\begin{equation}\label{bound}
|\delta_n|\le 2A_n, ~~~\E(\delta_n^2|\F_{n-1})\le A_n^2,~~~n\ge 2.
\end{equation}

We remark that a useful property of $A_n$ defined in \eqref{An} is
\begin{equation}\label{An+1}
A_1=A_2=\frac12, ~~A_{n+1}=A_n\big(1+\frac{a_n}{n}\big)^{-1},~~n\ge 2.
\end{equation}

We consider the asymptotic properties of $\sum^n_{k=2}\delta_n$ only. 
The properties of $A_nS_n$ can be obtained via \eqref{AnSn}
since $|\delta_1|$ is bounded by $1$.

The following lemma  will be used in our proofs.

\begin{lemma}\label{lem1} For the sequences $a_n$, $A_n$ and $V_n$ defined in \eqref{an} and \eqref{An} we have

\noindent\textbf{(a)}:  $nA_n$ is non-decreasing in $n\ge 2$.

\noindent\textbf{(b)}:  $A_n/(n-1)$ is non-increasing in $n\ge 2$.

\noindent\textbf{(c)}:  $\frac{1}{n}\le \frac{A_n}{\sqrt{V_n}}<\sqrt{\frac{3}{n}}$ for $n\ge 2$.
\end{lemma}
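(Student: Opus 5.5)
The whole argument runs through the recursion \eqref{An+1}, $A_{n+1}=A_n(1+a_n/n)^{-1}$ for $n\ge 2$, together with the bound $|a_n|\le 1$. Parts (a) and (b) are monotonicity statements that follow directly from it, and part (c) comes from summing the monotonicity bounds.

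For (a), compute the ratio for $n\ge 2$:
\[
\frac{(n+1)A_{n+1}}{nA_n}=\frac{n+1}{n}\cdot\frac{n}{n+a_n}=\frac{n+1}{n+a_n}\ge 1,
\]
since $a_n\le 1$. So $nA_n$ is non-decreasing for $n\ge 2$.

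For (b), similarly,
\[
\frac{A_{n+1}/n}{A_n/(n-1)}=\frac{n-1}{n+a_n}\le 1,
\]
since $a_n\ge -1$. So $A_n/(n-1)$ is non-increasing for $n\ge 2$.

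For (c), we bound $V_n=\sum_{k=2}^n A_k^2$ in terms of $A_n^2$ from both sides.

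\emph{Upper bound on $A_n/\sqrt{V_n}$.} By (b), $A_k\ge \frac{k-1}{n-1}A_n$ for $2\le k\le n$. Hence
\[
V_n\ge \frac{A_n^2}{(n-1)^2}\sum_{j=1}^{n-1}j^2=\frac{A_n^2\, n(2n-1)}{6(n-1)}.
\]
We need this to exceed $nA_n^2/3$, which is equivalent to $2n-1>2(n-1)$, i.e. $-1>-2$, which is true. Therefore $A_n^2/V_n<3/n$.

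\emph{Lower bound on $A_n/\sqrt{V_n}$.} By (a), $A_k\le nA_n/k$ for $2\le k\le n$. Hence
\[
V_n\le n^2A_n^2\sum_{k=2}^n k^{-2}\le n^2A_n^2\,(\pi^2/6-1)<n^2A_n^2,
\]
which gives $A_n/\sqrt{V_n}>1/n$, and in particular $A_n/\sqrt{V_n}\ge 1/n$.

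The only delicate step is the strict upper bound in (c). It requires using the exact sum of squares $\sum_{j=1}^{n-1}j^2=(n-1)n(2n-1)/6$ rather than a cruder estimate, because a cruder bound would lose the constant $3$.
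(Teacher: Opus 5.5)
Your proof is correct and follows the paper's route. The paper also uses the ratio identities $\frac{(n+1)A_{n+1}}{nA_n}=\frac{n+1}{n+a_n}\ge 1$ and $\frac{A_{n+1}/n}{A_n/(n-1)}=\frac{n-1}{n+a_n}\le 1$ from \eqref{An+1}, then sums $A_k\le nA_n/k$ and $A_k\ge \frac{k-1}{n-1}A_n$ over $2\le k\le n$ with the exact formula for $\sum_{j=1}^{n-1}j^2$. Your write-up is cleaner than the printed one, which has typos in the ratio for (b) and omits the factor $A_n^2$ in the bound $V_n>nA_n^2/3$.
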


\begin{proof}  For convenience, we interpret $\prod^n_{k=2}x_k=1$ for any sequence $\{x_k\}$ whenever $n<2$.  Then $A_n=\frac12\prod^{n-1}_{k=2}\big(1+\frac{a_k}k\big)^{-1}$ and 
$n=2\prod^{n-1}_{k=2}(1+\frac1k)$ for $n\ge 2$.
For $n\ge 2$,
\[
\frac{(n+1)A_{n+1}}{nA_n}=\frac{1+\frac{1}{n}}{1+\frac{a_n}{n}}\ge 1 
\]
since $a_n=2p_n-1\le 1$ for all $n\ge 2$.  This proves part (a) of the lemma.

To show part (b),  we see that $1/(n-1)=\prod^{n-1}_{k=1}(1-\frac{1}{k})$ for $n\ge 2$, and
\[
\frac{A_{n+1}/n}{A_n/(n-1)}=\frac{1-\frac{1}{n}}{1+\frac{1}{a_n}}
=\frac{1-\frac{1}{n}}{1+\frac{2p_1-1}{n}}\le 1
\]
since $a_n\ge -1$.  Thus, we  conclude that $A_n/(n-1)$ is non-increasing.  

By using the monotonicity, we have from part (a)
\[
V_n=\sum^n_{k=2}A_k^2=\sum^n_{k=2}\frac{(kA_k)^2}{k^2}\le (nA_n)^2\sum^n_{k=2}\frac{1}{k^2}<A_n^2n^2
\]
and  from part (n)
\[
V_n=\sum^n_{k=2}(\frac{A_k}{k-1})^2(k-1)^2\ge (\frac{A_n}{n-1})^2\sum^n_{k=2}(k-1)^2=\frac{A_n^2}{(n-1)^2}\frac{1}{6}n(n-1)(2n-1)>\frac{n}3.
\]
Two estimates above yield part (c).
This completes the proof.
\end{proof}

\begin{lemma}\label{lem2} If $\lim_{n\to\infty}V_n=\infty$, then $nA_n$ is monotone and diverges to infinity.
\end{lemma}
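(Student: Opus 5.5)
Monotonicity is already in hand: by part (a) of Lemma~\ref{lem1}, $nA_n$ is non-decreasing in $n\ge 2$. So $nA_n$ has a limit in $(0,\infty]$, and the only thing left to prove is that this limit is infinite. I will argue by contradiction.

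Suppose $nA_n\to c$ with $c<\infty$. Since $nA_n$ is non-decreasing, we have $nA_n\le c$ for every $n\ge 2$, so
\[
A_k^2\le \frac{c^2}{k^2}, \qquad k\ge 2.
\]
Summing over $k$ gives
\[
V_n=\sum_{k=2}^n A_k^2\le c^2\sum_{k=2}^\infty \frac{1}{k^2}<\infty
\]
for all $n$. This contradicts the hypothesis $V_n\to\infty$. Hence $nA_n\to\infty$.

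An alternative route is the left inequality in part (c) of Lemma~\ref{lem1}, which gives $\sqrt{V_n}\le nA_n$; the divergence of $V_n$ then forces $nA_n\to\infty$ directly. Either way the argument is short.

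The only point to check is that part (a) is valid as stated. Its proof uses the ratio
\[
\frac{(n+1)A_{n+1}}{nA_n}=\frac{1+1/n}{1+a_n/n}\ge 1,
\]
which holds because $a_n\le 1$, together with the recursion \eqref{An+1}. Both of these hold for all $n\ge 2$, so there is no real obstacle.
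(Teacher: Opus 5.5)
Your proof is correct and is essentially the paper's argument: monotonicity comes from part (a) of Lemma~\ref{lem1}, and if the limit $c$ of $nA_n$ were finite, then $V_n\le c^2\sum_{k\ge 2}k^{-2}$ would be bounded, contradicting $V_n\to\infty$. Your alternative via $\sqrt{V_n}< nA_n$ from part (c) is also valid, since it rests on the same estimate.
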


\begin{proof} The monotonicity of $nA_n$ follows from part (a) of Lemma~\ref{lem1}.  
Set $b=\lim_{n\to\infty}nA_n$. Then $b=\infty$.  Otherwise,  $b$ is finite and $A_n\le b/n$ for all $n\ge 2$ and
$V_n\le \sum^n_{k=2}b^2/k^2<b^2$ for all $n\ge 2$, contradictory to the assumption.  The proof is completed.
\end{proof}

\begin{lemma}\label{lem3}
Assume $\{b_n; n\ge 2\}$ is sequence of positive constants.  If $\sum^\infty_{n= 2}b_n^{-2}A_n^2<\infty$, then $\sum^n_{k=2}b_k^{-1}\delta_k$ converges almost surely.
\end{lemma}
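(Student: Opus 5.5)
The plan is to treat $T_n:=\sum^n_{k=2}b_k^{-1}\delta_k$ as a square-integrable martingale with respect to $\{\F_n\}$ and show it is bounded in $\mathcal{L}^2$. The martingale convergence theorem then gives almost sure convergence.

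First, $\{b_k^{-1}\delta_k, \F_k; k\ge 2\}$ is a martingale difference sequence. Indeed, $b_k$ is a deterministic positive constant and $\E(\delta_k\mid\F_{k-1})=0$ by construction, since $\delta_k=M_k-M_{k-1}$ with $\{M_n,\F_n\}$ a martingale. Each $\delta_k$ is bounded by \eqref{bound}, $|\delta_k|\le 2A_k$, so every term is square integrable. Hence $T_n$ is a martingale with $T_1=0$.

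Next, orthogonality of martingale increments and the conditional variance bound in \eqref{bound} give
\[
\E T_n^2=\sum^n_{k=2}b_k^{-2}\E\big[\E(\delta_k^2\mid\F_{k-1})\big]\le \sum^n_{k=2}b_k^{-2}A_k^2\le \sum^\infty_{k=2}b_k^{-2}A_k^2<\infty .
\]
So $\sup_n\E T_n^2<\infty$. By Doob's $\mathcal{L}^2$ martingale convergence theorem, $T_n$ converges almost surely, and also in $\mathcal{L}^2$, to a finite limit. An equivalent route is to note that $\sum_k b_k^{-2}\E(\delta_k^2\mid\F_{k-1})\le\sum_k b_k^{-2}A_k^2<\infty$ deterministically, and then apply the standard result that a martingale converges almost surely on the event where its conditional quadratic variation is finite; here that event has probability one.

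There is no real obstacle. The only points to check are that $\E(\delta_k\mid\F_{k-1})=0$ for $k\ge2$, which holds because $M_n$ is a martingale, and that the bound $\E(\delta_k^2\mid\F_{k-1})\le A_k^2$ from \eqref{Conditional-2} holds. The latter follows since $1-a_{k-1}^2(S_{k-1}/(k-1))^2\le 1$.
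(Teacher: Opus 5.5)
Your proposal is correct and is essentially the paper's argument. The paper applies Theorem~2.17 of Hall and Heyde (almost sure convergence wherever $\sum_k b_k^{-2}\E(\delta_k^2\mid\F_{k-1})<\infty$), which is exactly your ``equivalent route.'' Because the bound $\E(\delta_k^2\mid\F_{k-1})\le A_k^2$ is deterministic, your main $\mathcal{L}^2$-boundedness argument via Doob's theorem amounts to the same thing, and it additionally gives $\mathcal{L}^2$ convergence.
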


\begin{proof} Note that $\{b_n^{-1}\delta_n, \F_n; n\ge 2\}$ is a sequence of martingale differences.  It follows from  Theorem~2.17 in Hall and Heyde~\cite{Hall1980} that  $\sum^n_{k=2}b_k^{-1}\delta_k$ converges almost surely if $\sum^\infty_{n=2}b_n^{-2}\E(\delta_n^2|\F_{n-1})<\infty$ almost surely. Then we have
\[
\sum^\infty_{n=2}b_n^{-2}\E(\delta_n^2|\F_{n-1})\le \sum^\infty_{n=2}b_n^{-2}A_n^2 <\infty
\]
from the estimates in \eqref{bound} and our assumption in the theorem.    This completes the proof.
\end{proof}

Now we are ready to prove the theorems in section~\ref{main}.

\begin{proof}[Proof of Theorem~\ref{thm1}] 
We will apply Lemma~\ref{lem3} with $b_n=nA_n$. With this choice,  we have
\[
\sum^\infty_{n=2}b_n^{-2}A_n^2=\sum^\infty_{n=2}\frac{1}{n^2}<\infty,
\]
which implies $\sum^n_{k=2}\frac{\delta_k}{kA_k}$ converges almost surely. Moreover, since $b_n=nA_n$ is non-decreasing and diverges to infinity, we have from Kronecker's lemma that 
$\frac{1}{nA_n}\sum^n_{k=2}\delta_k$ converges almost surely.  In view of \eqref{AnSn}, we obtain
\[
\frac{S_n}{n}=\frac{A_nS_n}{nA_n}=\frac{\sum^n_{k=2}\delta_k}{nA_n}+\frac{p_1X_1}{nA_n}
\]
converges almost surely to zero.   This completes the proof.
\end{proof}

\begin{proof}[Proof of Theorem~\ref{thm2}] According to Corollary 3.1 in \cite{Hall1980}, it suffices to verify
\begin{equation}\label{inprobability}
    \frac1{V_n}\sum^n_{k=2}\E(\delta_k^2|\F_{k-1})\to 1~~~\text{in probability} 
\end{equation}
and the conditional Lindeberg condition
\begin{equation}\label{lindeberg}   \frac1{V_n}\sum^n_{k=2}\E(\delta_k^2I(|\delta_k|\ge \varepsilon\sqrt{V_n})|\F_{k-1})\to 0~~~\text{in probability} 
\end{equation}
for every $\varepsilon>0$.

Both conditions can be easily verified. For \eqref{inprobability}, we have from Theorem~\ref{thm1} and \eqref{Conditional-2} that $\E(\delta_k^2|\F_{k-1})=A_k^2(1+o(1))$ almost surely as $k\to\infty$, which implies  
\[
\frac{1}{V_n}\sum^n_{k=2}\E(\delta_k^2|\F_{k-1})=
\frac1{V_n}\sum^n_{k=2}A_k^2(1+o(1))\to 1
\]
almost surely. This is stronger than \eqref{inprobability}.

For every  fixed integer $k_0\ge 2$, we have from part (c) in Lemma~\ref{lem1} that
\[
\frac{\max_{2\le k\le n}A_k}{\sqrt{V_n}}\le \frac{\max_{2\le k\le k_0}A_k}{\sqrt{V_n}}+\frac{\max_{k_0+1\le k\le n}(A_k/\sqrt{V_k})\sqrt{V_k}}{\sqrt{V_n}}\le \frac{\max_{2\le k\le k_0}A_k}{\sqrt{V_n}}+\sqrt{\frac{3}{k_0+1}},
\]
which, coupled with assumption $\lim_{n\to\infty}V_n=\infty$, yields 
\[
\limsup_{n\to\infty}\frac{\max_{2\le k\le n}A_k}{\sqrt{V_n}}\le \sqrt{\frac{3}{k_0+1}}.
\]
Since $k_0$ is arbitrary, we have by letting $k_0\to\infty$ 
\begin{equation}\label{negligible}
    \lim_{n\to\infty}\frac{\max_{2\le k\le n}A_k}{\sqrt{V_n}}=0.
\end{equation}
Since $|\delta_k|\le 2A_k$ for $k\ge 2$, for any fixed $\varepsilon>0$, if $n$ is large enough such that $2\max_{1\le k\le n}A_k/\sqrt{V_n}<\varepsilon$, the sets $\{|\delta_k|\ge \varepsilon\sqrt{V_n}\}$, $2\le k\le n$, are empty, and the left-hand side of \eqref{lindeberg} are zero almost surely for all large $n$. This proves \eqref{lindeberg} and thus completes the proof of Theorem~\ref{thm2}.
\end{proof}

\begin{proof}[Proof of Theorem~\ref{thm3}] 
To show the law of the iterated logarithm in the theorem, we will employ 
Theorems 4.7 and 4.8 in \cite{Hall1980}.  From these two theorems, it suffices to check the following three conditions:
\begin{equation}\label{almostsure}
    \frac1{V_n}\sum^n_{k=2}\E(\delta_k^2|\F_{k-1})\to 1~~~\text{almost surely}; 
\end{equation}
\begin{equation}\label{series}
    \sum^\infty_{k=2}\frac{1}{V_k^2}\E(\delta_k^4|\F_{k-1})<\infty~~~\text{almost surely}; 
\end{equation}
\begin{equation}\label{limit1}
    \lim_{n\to\infty}\frac{V_n}{V_{n+1}}=1 \text{ and } 
    \lim_{n\to\infty}V_n=\infty. 
\end{equation}
In fact,  \eqref{almostsure} has been verified in the proof of 
Theorem~\ref{thm2}. The first limit in \eqref{limit1} follows from 
\eqref{negligible} and the second one is our assumption in the theorem. Therefore, we need to show \eqref{series} only. 

Since $|\delta_n|\le 2A_n$,  we obtain from part (c) in Lemma~\ref{lem2} that
\[
\sum^\infty_{k=2}\frac{1}{V_k^2}\E(\delta_k^4|\F_{k-1})\le 16\sum^\infty_{k=2}\frac{A_k^4}{V_k^2}\le 16\sum^\infty_{k=2}(\frac{3}{k})^2=144\sum^\infty_{k=2}\frac1{k^2}<\infty,
\]
which proves \eqref{series}. Hence, we have proved the theorem.
  \end{proof}

\begin{proof}[Proof of Theorem~\ref{thm4}]
From \eqref{bound}, we have
\[
\sum^\infty_{k=2}\E(\delta_k^2|\F_{k-1})\le \sum^\infty_{k=2}A_k^2<\infty
\]
from our assumption.  
    By setting $b_n=1$ for all $n$ in the proof of Theorem~\ref{thm1}, we have $\sum^n_{k=2}\delta_k$
    converges almost surely to a random variable  $\sum^\infty_{k=2}\delta_k$.
    
    Therefore, we have from \eqref{AnSn}
 that
 \[
\lim_{n\to\infty}A_nS_n=
\lim_{n\to\infty}\sum^n_{k=2}\delta_k
+\delta_1+2p_1(2q-1)
=\sum^\infty_{k=1}\delta_k+2p_1(2q-1)=:L
 \]
 almost surely.
 From Burkholder's inequality for martingales, see, e.g., Theorem 2.10 in \cite{Hall1980}, we have
\[
\E(A_nS_n-L)^4=\E(\sum^\infty_{k=n+1}\delta_k)^4\le (36\sqrt{3})^4\Big(\sum^\infty_{k=n+1}\E(\delta_k^2)\Big)^2\le
16(36\sqrt{3})^4\Big(\sum^\infty_{k=n+1}A_k^2\Big)^2
\]
 converges to zero as $n\to\infty$. We have used the estimate $|\delta_k|\le 2A_k$ in \eqref{bound} and the assumption $\sum^\infty_{k=2}A_k^2<\infty$.

To show the necessary and sufficient condition for the nondegeneracy of $L$, i.e.,  $\Var(L)>0$,
it is equivalent to showing that $\Var(L)=0$ if and only if $\Var(X_1)=q(1-q)=0$ and $p_n=1$ for all $n\ge 1$. 

 First, note that if $p_n=1$ for all $n\ge 1$, then $X_n=X_1$ for all $n\ge 1$, and $A_nS_n=S_n/n=X_1$ for all $n\ge 2$. This implies that the limit $L=X_1$ is degenerate if $\Var(X_1)=0$  and $p_n=1$ for all $n\ge 1$, and thus we have proved the sufficiency.  

To show the necessity, it suffices to show that $\Var(L)>0$ under one of the following two conditions:

\noindent (\textbf{C1}): For some $n_0\ge 1$, $p_{n_0}<1$; 

\noindent (\textbf{C2}): $p_n=1$ for all $n\ge 1$ and $q \in (0, 1)$.   

We have shown under condition $p_n=1$ for all $n\ge 1$ that  
the limit $L=X_1$.  Therefore, $L$ is nondegenerate 
under condition (\textbf{C2}).    To show $\Var(L)>0$ under condition
(\textbf{C1}), we note that
\begin{equation}\label{Var(L)}
\Var(L)=\sum^\infty_{n=1}\E(\delta_n^2)=
4p_1^2q(1-q)+\sum^\infty_{n=2}A_{n}^2\Big(1-a_{n-1}^2\E(\frac{S_{n-1}}{n-1})^2\Big)
\end{equation}
from \eqref{Conditional-2}.  Assume $p_{n_0}<1$ for some $n_0\ge 1$, and  thus,  $\alpha_{n_0}=-1$ with a positive probability $1-p_{n_0}>0$, and $X_{n_0+1}=-X_{\beta_{n_0}}$.  $|S_{n_0+1}|\le n_0-1$ with a probability $1-p_{n_0}$. Therefore, we have
\[
A_{n_0+2}^2\Big(1-a_{n_0+1}^2\E(\frac{S_{n_0+1}}{n_0+1}\big)^2\Big)\ge A_{n_0+1}^2\Big(1-(\frac{n_0-1}{n_0+1})^2\Big)(1-p_{n_0})>0
\]
which implies $\Var(L)>0$ from \eqref{Var(L)}.
 This completes the proof of the theorem.
 \end{proof}

\begin{proof}[Proof of Theorem~\ref{thm5}(\textbf{a})]
Set
\begin{equation}\label{EnQn}
E_n=\E(S_n^2), ~~Q_n=\E(S_n^4),
\end{equation}
and define
\[
K_n=E(S_n^4)-3\big(E(S_n^2)\big)^2
=Q_n-3E_n^2.
\]
We first derive a recursive formula for $K_n$ without any assumption on $X_1$ or $p_n$.

Since
\[
S_{n+1}=S_n+X_{n+1}, ~~X_{n+1}^2=1,
\]
we have
\[
S_{n+1}^2=S_n^2+2S_nX_{n+1}+1.
\]
By taking conditional expectations, we have from \eqref{conditional-mean}
\[
\E(S_{n+1}^2\mid\mathcal F_n)
=
\left(1+\frac{2a_n}{n}\right)S_n^2+1,
\]
and consequently,
\begin{equation}\label{En+1}
E_{n+1}
=
\left(1+\frac{2a_n}{n}\right)E_n+1.
\end{equation}

Similarly, since
\[
S_{n+1}^4
=
S_n^4+4S_n^3X_{n+1}
+6S_n^2+4S_nX_{n+1}+1,
\]
we obtain
\[
E(S_{n+1}^4\mid\mathcal F_n)
=
\left(1+\frac{4a_n}{n}\right)S_n^4
+
\left(6+\frac{4a_n}{n}\right)S_n^2+1.
\]
Then
\begin{equation}\label{Qn+1}
Q_{n+1}
=
\left(1+\frac{4a_n}{n}\right)Q_n
+
\left(6+\frac{4a_n}{n}\right)E_n+1.
\end{equation}

Combining \eqref{En+1}, \eqref{Qn+1} and definition \eqref{EnQn}, we
have 
\begin{eqnarray*}
K_{n+1}
&=&
\left(1+\frac{4a_n}{n}\right)Q_n
+\left(6+\frac{4a_n}{n}\right)E_n+1
-3\left[
\left(1+\frac{2a_n}{n}\right)E_n+1
\right]^2\\
&=&
\left(1+\frac{4a_n}{n}\right)(Q_n-3E_n^2)
+\left(6+\frac{4a_n}{n}\right)E_n+1\\
&&\quad
-3\left[
\frac{4a_n^2}{n^2}E_n^2
+2\left(1+\frac{2a_n}{n}\right)E_n+1
\right],
\end{eqnarray*}
which simplifies to 
the fundamental recursion for all $n\ge 1$
\begin{equation}\label{Kn+1}
K_{n+1}
=\left(1+\frac{4a_n}{n}\right)K_n-\frac{12a_n^2}{n^2}E_n^2
-\frac{8a_n}{n}E_n-2.
\end{equation}

We now split our proof in a few steeps,

\noindent \textbf{Step 1}. We will show $K_4\le 0$ and $K_5\le -2/3$  from the above recursive formula.   Our initial values are
\[
E_1=1,~~ Q_1=1, ~~ K_1=-2.
\]

For $n=1$, $E_2=2(1+a_1)$, or equivalently, $a_1=\frac{E_2}{2}-1$.  Using \eqref{Kn+1}, we have
\[
K_2=(1+4a_1)(-2)-12a_1^2-8a_1-2=-4(1+a_1)(1+3a_1)=4E_2-3E_2^2.
\]

Applying \eqref{Kn+1} with $n=2$ yileds
\begin{eqnarray*}
K_3
&=&(1+2a_2)K_2-3a_2^2E_2^2-4a_2E_2-2\\
&=&(1+2a_2)(4E_2-3E_2^2)-3a_2^2E_2^2-4a_2E_2-2\\
&=&-\frac23-\frac13\Big(3(1+a_2)E_2-2\Big)^2.
\end{eqnarray*}
Since $E_3=(1+a_2)E_2+1$ from \eqref{En+1}, we have

\[
K_3=
-\frac23-\frac13\left(3E_3-5\right)^2=-3E_3^2+10E_3-9.
\]


Using \eqref{Kn+1} with $n=3$, we obtain
\[
K_4
=
\left(1+\frac43a_3\right)K_3
-\frac43a_3^2E_3^2
-\frac83a_3E_3-2=-\frac13G(E_3,a_3),
\]
where function $G(x,y)$ is defined as
\[
G(x,y)
=
4x^2y^2
+(12x^2-32x+36)y
+9x^2-30x+33.
\]

Our goal is to show $K_3\le 0$.  
Since $E_3=1+(1+a_2)M_2\ge 1$ and $-1\le  a_3\le 1$, 
we need to verify that
\[
G(x,y)\ge 0 ~~\text{ for } x\ge 1,~ -1\le y\le 1.
\]

For each $x$, $G(x,y)$ is a quadratic function of $y$, and the global minimum is achieved at
\[
\bar{y}_x=
-\frac{12x^2-32x+36}{8x^2}
=
-\frac{3x^2-8x+9}{2x^2}.
\]
If $\bar{y}_x\notin [-1,1]$, the minimum over $[-1,1]$ is attained at an endpoint. In fact, at the endpoints,
\[
G(x,-1)
=
(x-1)(x+3)\ge 0, ~~
G(x,1)
=
25x^2-62x+69>0.
\]
Note that $\bar{y}_x<0$ for all $x\ge 1$.
The range of $x$ for $\bar{y}_x\in [-1,1]$ is $[4-\sqrt{7}, 4+\sqrt{7}]$.
Within this range,
\[
G(x,\bar{y}_x)
=
\frac{18x^3-85x^2+144x-81}{x^2}\ge 0
\]
can be easily verified since the numerator for the expression of $G(x,\bar{y}_x)$ is increasing. 

We conclude from the above discussion that 
$K_4\le 0$.

Now we employ \eqref{Kn+1} with $n=4$ and get
\begin{eqnarray*}
K_5
&=&
(1+a_4)K_4
-\frac34a_4^2E_4^2
-2a_4E_4-2\\
&=&(1+a_4)K_4
-\frac34(a_4E_4+\frac43)^2-\frac{2}{3}\\
&\le& -\frac23
\end{eqnarray*}
since $(1+a_4)K_4\le 0$.

\noindent\textbf{Step 2}.  Define 
\[
\kappa_n=\E(M_n^4)-3\big(\E(M_n^2)\big)^2.
\]
We will show 
\begin{equation}\label{kappa}
\kappa_\infty:=\E(L^4)-3\big(\E(L^2)\big)^2<0
\end{equation}
under condition $\lim_{n\to\infty}V_n<\infty$,
where $L$ is the limit random variable in Theorem~\ref{thm4}. 

Recall $M_n=A_nS_n$ for all $n\ge 4$.  We have $\kappa_n=A_n^4K_n$ and obtain from \eqref{Kn+1} that for $n\ge 5$

\begin{equation}\label{kappa+1}
\kappa_{n+1}
=\frac{1+\frac{4a_n}{n}}{(1+\frac{a_n}{n})^4}\kappa_n-A_{n+1}\left(\frac{12a_n^2}{n^2}E_n^2
+\frac{8a_n}{n}E_n+2\right)<\frac{1+\frac{4a_n}{n}}{(1+\frac{a_n}{n})^4}\kappa_n
\end{equation}
 since $A_{n+1}>0$ and
\[
\frac{12a_n^2}{n^2}E_n^2
+\frac{8a_n}{n}E_n+2=\frac{1}{3}\left(\frac{6a_nE_n}{n}+2\right)^2+\frac23>0.
\]
Employing \eqref{kappa+1} repeatedly ,  we have for all $n>5$
\[
\kappa_{n+1}<\kappa_5\prod^n_{k=5}\frac{1+\frac{4a_k}{k}}{(1+\frac{a_k}{k})^4}.
\]
Since $M_n$ converges in $\mathcal{L}^4$ to $L$, we have 
\begin{equation}\label{momoment-i}
\lim_{n\to\infty}\E(M_n^i)=\E(L^i),~~~ i=1,\ldots, 4,
\end{equation}
and thus $\lim_{n\to\infty}\kappa_{n+1}= \kappa_{\infty}$. Therefore, by letting $n$ go to infinity in the above equation, we conclude that
\begin{equation}\label{kappa-inf}
\kappa_{\infty}\le \kappa_5\prod^\infty_{k=5}\frac{1+\frac{4a_k}{k}}{(1+\frac{a_k}{k})^4}.
\end{equation}
Note that for $k\ge 5$, $\frac{1+\frac{4a_k}{k}}{(1+\frac{a_k}{k})^4}>0$, and
\[
|1-\frac{1+\frac{4a_k}{k}}{1+\frac{a_k}{k})^4}|=
\frac{1}{(1+\frac{a_k}{k})^4}|(1+\frac{a_k}{k})^4-(1+\frac{4a_k}{k})|\le 11(\frac{5}{4})^4k^{-2},
\]
which implies
\[
\sum^\infty_{k=5}|1-\frac{1+\frac{4a_k}{k}}{1+\frac{a_k}{k})^4}|<\infty.
\]
 Consequently, the limit $\prod^\infty_{k=5}\frac{1+\frac{4a_k}{k}}{(1+\frac{a_k}{k})^4}\in (0,\infty)$. \eqref{kappa} follows from \eqref{kappa-inf} since $\kappa_5=K_5/A_5=-2A_5/3<0$.

\noindent\textbf{Step 3}. Finally, by our assumption,  $\E(M_n)=p_1(2q-1)=0$ and thus $\E(L)=0$. Therefore,  $\kappa_\infty$ defined in \eqref{kappa} is the fourth cumulant of $L$. This implies that $L$ is not normal since the fourth cumulant of any normal random variable must be equal to zero. This completes the proof.   
\end{proof}

\begin{proof}[Proof of Theorem~\ref{thm5}(\textbf{b})]

Set  $\mu=\E(X_1)=2q-1$. Then $\E(M_1)=p_1\mu$. Since $\{M_n,~\F_n; n\ge 1\}$ is a martingale sequence,  $\E(M_n)=\E(M_1)=p_1\mu$, and  
$\E(L)=\mu$ from \eqref{momoment-i}.

Now we define $C_n=\E\big(M_n-p_1\mu\big)^3$, the third central moment of $M_n$. Also define $C_{\infty}=\E\big(L-p_1\mu\big)^2$.  Again, from \eqref{momoment-i} we have 
\[
\lim_{n\to\infty}C_n=C_{\infty}\in (-\infty,\infty).
\]
Our goal is to prove $C_\infty\ne 0$ under assumption that $q\ne \frac12$ and $p_n\ge \frac12$ for all $n\ge 1$.  This is sufficient to 
conclude that $L$ is not normal since the third central moment of a normal random variable must be zero.

We will establish a recursive formula for $C_n$ first.

Recall that 
\[
\delta_{n+1}=A_{n+1}\left(X_{n+1}-a_{n}\frac{S_n}{n}\right)
\]
as in \eqref{deltan}. We have from  \eqref{Conditional-2} that
\[
Q_n:=\E(\delta_{n+1}^2\mid\F_n)
=A_{n+1}^2\left(1-\frac{a_n^2S_n^2}{n^2}\right).
\]
By using \eqref{conditional-mean} and the fact that $X_{n+1}^2=1$, we can show 
\[
D_n:=\E(\delta_{n+1}^3\mid\F_n)
=-2A_{n+1}^3\frac{a_nS_n}{n}
\left(1-\frac{a_n^2S_n^2}{n^2}\right).
\]

Set $Y_n=M_n-p_1\mu$. 
Since $Y_{n+1}=Y_n+\delta_{n+1}$, we have
\[
Y_{n+1}^3=Y_{n}^3+3Y_{n}^2\delta_{n+1}+3Y_n\delta_n^2+\delta_n^3.
\]
Taking conditional expectations on $\F_n$, using $\E(\delta_{n+1}\mid\F_n)=0$, and then taking expectations we get
\[
C_{n+1}=C_n+3\E(Y_nQ_n)+\E(D_n).
\]
By substituting $M_n=A_nS_n$, using $A_{n+1}=nA_n/(n+a_n)$ from \eqref{An+1}, and simplifying, we have the recursive formula of $C_n$ for $n\ge 2$
\begin{equation}\label{rec}
C_{n+1}=c_nC_n+e_n,
\end{equation}
where
\[
c_n=\frac{n^2(n+3a_n)}{(n+a_n)^3}, ~~e_n=
\frac{2p_1\mu}{(n+a_n)^3}
\Big(
-3a_n^2nv_n-a_nn^2A_n^2+a_n^3p_1^2\mu^2\Big)~~~\text{ with } v_n:=\Var(M_n).
\]

Since $\mu=2q-1\in [-1,1]$,  $q\ne \frac12$ implies $\mu\ne 0$. 
Our initial is $C_2=p_1\mu(1-3p_1+2p_1^2\mu^2)$, which can be easily obtained by using 
\[
\E\big(M_2^j\big)=\E\Big(\E\big((\frac{S_2}2)^j|\F_1\big)\Big)=p_1\E\big(X_1^j\big)=\begin{cases}
p_1\mu,&j=1,~3,\\
p_1,   &j=2,
\end{cases}
\]
expanding and simplifying 
$C_2=\E\big(M_2-p_1\mu\big)^3$.


By repeatedly using \eqref{rec}, we have
\[
C_n
=
C_2\prod_{j=2}^{n-1}c_j
+
\sum_{k=2}^{n-1}
e_k\prod_{j=k+1}^{n-1}c_j, ~~n\ge 3,
\]
which is equivalent to
\[
\bar{C}_n
=
\bar{C}_2\prod_{j=2}^{n-1}c_j
+
\sum_{k=2}^{n-1}
\bar{e}_k\prod_{j=k+1}^{n-1}c_j, ~~n\ge 3,
\]
where
\[
\bar{C}_n=\frac{C_n}{p_1\mu}, ~~
\bar{C}_\infty=\frac{C_\infty}{p_1\mu}, ~~\bar{e}_n=\frac{e_n}{p_1\mu}=
\frac{2a_n}{(n+a_n)^3}
\Big(
-3a_nnv_n-n^2A_n^2+a_n^2p_1^2\mu^2\Big).
\]

Define
\[
P_k=\prod_{j=k}^{\infty}c_j, ~~k\ge 2.
\]
Under our assumption $a_n\ge 0$,  $c_n>0$ and 
\[
c_n=1-\frac{3a_n^2}{n^2}+O(n^{-3}),
\]
which implies $\sum^\infty_{n=2}|1-c_n|<\infty$,
all infinite products $P_k\in (0,\infty)$ are well defined.  
By letting
$n$ tend to infinity, we have
\begin{equation}\label{Cinf}
\bar{C}_\infty
=
\bar{C}_2P_2
+\sum_{n=2}^\infty \bar{e}_nP_{n+1}.
\end{equation}
The right-hand side of \eqref{Cinf} is also well defined since $P_n>0$ and $e_n\le 0$ for all $n\ge 2$.  In fact,  since $p_n\ge 1/2$, we have $a_n=2p_n-1\in [0,1]$, $nA_n\ge 2A_2=1$ due to the monotonicity of $nA_n$ from Lemma~\ref{lem1}, and $|\mu|\le 1$, and consequently,  $n^2A_n^2\ge a_n^2p_1\mu^2$, and 
\[
\bar{e}_n=\frac{e_n}{p_1\mu}=
\frac{2a_n}{(n+a_n)^3}
\Big(
-3a_nnv_n-n^2A_n^2+a_n^2p_1^2\mu^2\Big)\le 0
\]
for all $n\ge 2$.

We have $\bar{C}_2=1-3p_1+2p_1^2\mu^2$ as we have shown that $C_2=p_1\mu(1-3p_1+2p_1^2\mu^2)$.

To complete the proof, it suffices to show $\bar{C}_\infty<0$, which implies $C_{\infty}\ne 0$.
We consider two cases: Case \textbf{1}: $|\mu|<1$, and  Case \textbf{2}: $|\mu|=1$.

    Define $f(x)=1-3x+2x^2=(x-1)(2x-1)$. clearly, $f(x)\le 0$ for $x\in [1/2, 1]$. 

    For case 1,  $\bar{C}_2<f(p_1)\le 0$ for $1/2\le p_1\le 1$. Since all $\bar{e}_n\le 0$, it follows from \eqref{Cinf} that $\bar{C}_\infty\le \bar{C}_2P_2<0$.

   For Case 2,  $\bar{C}_2=f(p_1)\le 0$ for $1/2\le p_1 \le 1$. Therefore, all summands on the right-hand side of \eqref{Cinf} are non-positive. But we will show at least one of these summands is negative.   Since $|\mu|=1$ implies $q=0$ or $1$, that is, $X_1$ is degenerate.  Since we assume $L$ is non-degenerate, we have $\Var(L)>0$. Therefore, we have
   $v_n=\Var(M_n)\to \Var(L)>0$, which implies $v_n>0$ for all large $n$.  It is true that $a_n>0$ for infinitely many $n$, otherwise there exists an $n_0>2$ such that $a_n=0$ for all $n\ge n_0$. In this case, $A_n=A_{n_0}>0$ for all $n>n_0$, and $\lim_{n\to\infty}V_n=\sum^\infty_{n=2}A_n^2=\infty$,  contradictory to the assumption in Theorem~\ref{thm4}.  Now we conclude that for all large $n$ with $a_n>0$, 
   \[
\bar{e}_n=
\frac{2a_n}{(n+a_n)^3}
\Big(
-3a_nnv_n-n^2A_n^2+a_n^2p_1^2\mu^2\Big)
\le \frac{-6a_n^2nv_n}{(n+a_n)^3}<0, 
\]
yielding that $\bar{C}_{\infty}<0$.  This completes the proof of Theorem~\ref{thm5}.
\end{proof}


\end{document}